\theoremstyle{definition}
\newtheorem{thm}{Theorem}
\newtheorem{rmk}[thm]{Remark}
\newtheorem{cor}[thm]{Corollary}
\newtheorem{lem}[thm]{Lemma}
\newtheorem{exm}[thm]{Example}
\newtheorem{defi}[thm]{Definition}
\newtheorem{ass}[thm]{Assumption}{\bf}{\rm}
\newcommand{\CC}{\mathbb{C}}
\newcommand{\bb}{\mathbf{b}}
\DeclareMathOperator{\im}{im}
\newcommand{\mainfilecheck}[1]{0}
\newcommand{\opA}{\mathcal A}
\newcommand{\spanz}{\mathrm{span}}
\newcommand{\rank}{\mathrm{rank}}
\newcommand{\x}{x}
\newcommand{\f}{f}
\newcommand{\g}{g}
\newcommand{\Jac}{D\f(\x)}
\newcommand{\brx}{\kappa}
\newcommand{\diag}{\mathrm{diag}}
\newcommand{\xxi}{\xi}
\newcommand{\acc}{\xi}
\newcommand{\exact}{\xi}
\newcommand{\bigO}{\mathcal{O}}
\newcommand{\opB}{\mathcal B}
\title{Two-step Newton's method for deflation-one singular zeros of analytic systems\footnote{This article is part of the volume titled 
     ``Computational Algebra and Geometry:
     A special issue in memory and honor of Agnes Szanto''.}}
\begin{document}

\begin{abstract}
We propose a two-step Newton's method for refining an approximation of a singular zero whose deflation process terminates after one step, also known as a deflation-one singularity. Given an isolated singular zero of a square analytic system, our algorithm exploits an invertible linear operator obtained by combining the Jacobian and a projection of the Hessian in the direction of the kernel of the Jacobian. We prove the quadratic convergence of the two-step Newton method when it is applied to an approximation of a deflation-one 
 singular zero. Also, the algorithm requires a smaller size of matrices than the existing methods, making it more efficient. We demonstrate examples and experiments to show the efficiency of the method.
\end{abstract}

\author{Kisun Lee}
\address{School of Mathematical and Statistical Science, Clemson University, 220 Parkway Drive, Clemson, SC 29634, USA}
\email{kisunl@clemson.edu}

\author{Nan Li}
\address{School of Mathematical Sciences, Shenzhen University, Shenzhen 518060, Guangdong, China \& Guangdong Key Laboratory of Intelligent Information Processing, Shenzhen 518060, Guangdong, China}
\email{nan.li@szu.edu.cn}

\author{Lihong Zhi}
\address{Key Laboratory of Mathematics Mechanization, Academy of Mathematics and Systems Science, Chinese Academy of Sciences, Beijing 100190, China \& University of Chinese Academy of Sciences, Beijing 100049, China}
\email{lzhi@mmrc.iss.ac.cn}
\thanks{This research is supported by the National Key Research
 Project of China (2018YFA0306702), the National Natural Science
 Foundation of China (12071467, 12171324), the Basic and Applied Basic Research
Foundation of Guangdong Province (2022A1515010811), the Guangdong Provincial Pearl River Talents Program (2021QN02X310).}

\keywords{
    deflation-one singularity, local dual space, Newton's method, quadratic convergence}

\maketitle
\makeatletter
\newcommand{\keywords}[1]{%
\let\@@oldtitle\@title%
\gdef\@title{\@@oldtitle\footnotetext{\emph{Key words and phrases.} #1.}}%
}
\makeatother

\section{Introduction}\label{intr}


Consider a square analytic system $f=[f_1,\ldots,f_n]^\top$ and a zero $\xi\in\mathbb{C}^n$ of $f=0$.  Let $Df(\xxi)$ denote the Jacobian matrix of $f$  evaluated at $\xi$. When $Df(\xi)$ is invertible, the \textit{Newton iteration} $N(f, x) = x - Df(x)^{-1}f(x)$, applied to an approximation $x$ that is close enough to $\xi$, \textit{converges quadratically} to $\xi$. Namely,
$\|N(f,x)-\acc  \| =\bigO( \| x -\acc   \|^2)$ where $\bigO(g)$ indicates that the value is bounded above by $g$ up to a positive constant.

   When $Df(\xxi)$ is not invertible, i.e., $\dim\ker Df(\xxi) \geq 1$, Newton's method fails to maintain local quadratic convergence because there may exist a positive-dimensional manifold near $\xi$ that satisfies $\det Df(\xi)=0$  \cite{Reddien:1978}.
   Various modifications have been proposed to restore the quadratic convergence of Newton's method for isolated singular zeros from different perspectives. 

In cases where $\dim\ker Df(\xi) = 1$, a two-step Newton's method was proposed in \cite{LZ:2011} for refining an approximate zero $x$ such that it converges quadratically to $\xi$. The first step of the iteration involves projecting $x$ to $x'$ such that $x'-\xi$ approximately belongs to the one-dimensional linear space $\mathrm{span}_{\mathbb{C}}\{v'\}$, which approximately coincides with $\ker Df(\xi)$.
The second step of the iteration estimates a step length $\delta$ by solving a sequence of least squares problems and a linear system such that the resulting approximate solution  $x''=x'+\delta\cdot v'$ satisfies $  \|x''-\xi\|=\bigO(\| x -\xi\|^2)$.
In \cite{li2022improved}, an improved two-step Newton's method was proposed, which retains quadratic convergence without solving any least-squares problems or linear systems. 

This paper extends the algorithm in~\cite{li2022improved} to the case of  \textit{deflation-one singular zeros}, i.e.,  the deflation algorithm proposed by   Leykin, Verschelde, and Zhao ~\cite{leykin2006newton}  terminates in one iteration. 
We present a two-step Newton's method for refining the deflation-one singular zeros and prove its quadratic convergence.

One possible application of Newton's method for singular zeros is the \textit{zero cluster isolation problem}, which considers constructing a region that separates a cluster of points approximating a singular zero of the system from other zeros. This problem has been studied for various forms of singular zeros in works such as \cite{Dedieu2001on, HJLZ2020,burr2021inflation,burr2023isolating}. As these methods often require a well-approximated point as input, our two-step Newton's method can provide such a point.

Note that our computations are limited to a small enough neighborhood of one point and can allow small perturbations of functions within that neighborhood. Hence, it is possible to substitute analytic functions with polynomials provided there is a reliable method for estimating the difference between the two (see Example 9 (\ref{running_example_2})). For this reason, we will focus on the polynomial case for the rest of the paper.

\subsection*{\bf{Outline of results}}\,
Let $f=[f_1,\ldots,f_n]^\top\in \mathbb{C}[X_1, \ldots, X_n]^n$ be a square polynomial system with an isolated singular zero $\xi$. In Theorem~\ref{thm:multiplerootCharacterization}, we show that $\xi$ is a deflation-one singular solution if and only if
the linear operator 
$$\mathcal{A}(\xi)=Df(\xi)+D^2f(\xi)(v,\Pi_{\ker Df(\xi)}\cdot)$$
is invertible for almost all choices of $v\in \ker Df(\xi)$, where $\Pi_{\ker Df(\xi)}$ is the Hermitian projection to $\ker Df(\xi)$.
We can apply the singular value decomposition to $Df(\xi)$, which is denoted by $U\cdot\Sigma\cdot V^*$, and obtain $U=[U_1,U_2]$, $V=[V_1,V_2]$, where $U_1,V_1\in \mathbb{C}^{n\times (n-\kappa)}$ and $U_2,V_2\in\mathbb{C}^{n\times \brx}$, with $\kappa$ being the corank of $Df(\xi)$.
In Theorem \ref{opB}, we show that the deflation-one singularity can be further characterized by the invertibility of a smaller matrix of size $\brx\times \brx$ $$\opB(\xi)=U_2^*\cdot D^2\f(\xi)\cdot v\cdot V_2.$$

We propose a two-step Newton's method for refining a given approximation $x$ of a deflation-one singular zero $\xi$. 
Note that when $x$ is sufficiently close to $\xi$, the singular value decomposition of $Df(\xi)$ can be approximated as $Df(x)=U\cdot \Sigma \cdot V^*$. Thus, we can compute the breadth $\brx$ by using $Df(x)$ and then verify if $\xi$ is a deflation-one singular zero by checking the invertibility of $\opA(x)$.
The method involves computing the singular value decomposition $Df(x)=U\cdot \Sigma\cdot V^*$ and applying the following steps:
\begin{enumerate}
\item Refine $\x$ to 
\[
  \x'=\x-V_1\cdot\Sigma_1^{-1}\cdot U_1^*\cdot\f(\x),
\]
where $\Sigma_1=\diag(\sigma_1,\ldots,\sigma_{n-\kappa})$ is a diagonal matrix consisting of the first $n-\kappa$ singular values of $Df(x)$.
\item Refine $\x'$ to
\[\x''=\x' + V_2 \cdot \boldsymbol{\delta},\]
where $\boldsymbol{\delta}$ is obtained by solving the linear equation $\opB'\cdot\boldsymbol{\delta}=-U_2^*\cdot D\f(\x')\cdot v$ with $\opB'=U_2^*\cdot D^2\f(\x')\cdot v\cdot V_2$. Here, note that $U_2^*,v$ and $V_2$ in the definition of $\opB'$ are taken from $Df(x)$ not $Df(x')$.
\end{enumerate}
We prove the quadratic convergence of the iterations when the approximation $x$ is sufficiently close to the exact deflation-one singular solution $\xi$.

\subsection*{\bf Related works}\, There are many different numeric and symbolic approaches to compute singular zeros of polynomial systems.   In~\cite{Rall66}, Rall studied the convergence
property of Newton's method for singular solutions, and many
modifications of Newton's method to restore the quadratic
convergence for singular solutions have been proposed
in ~\cite{Rall66,Reddien:1978,Reddien:1980,DeckerKelley:1980I,DeckerKelley:1980II,GriewankOsborne:1981,DeckerKelley:1982}.

In~\cite{Griewank85},  Griewank constructed a bordered system  from the initial system $f$ and the singular value decomposition of the Jacobian matrix $Df(x)$ to restore
the quadratic convergence of Newton's method when the Jacobian at the exact zero has corank
one. The method was  extended by Shen and Ypma~\cite{ShenYpma05,ShenYpma2007} to the case where the Jacobian has an arbitrary high-rank deficiency.

In~\cite{OWM:1983,YAMAMOTONORIO:1984,Ojika:1987}, Ojika  et al. proposed a deflation method to construct a regular system to refine an approximate isolated singular solution to high
accuracy. 
In~\cite{Lecerf:2002}, Lecerf
  gave a deflation algorithm that outputs a regular triangular system at the singular solution. In \cite{giusti2005location,giusti2007location}, Giusti, Lecerf, Salvy and Yakoubsohn established the criterion for detecting clusters and convergence analysis for Newton's method for one variable and singularity of embedding dimension one cases.
The deflation method has been further developed and generalized by Leykin, Verschelde, and Zhao~\cite{leykin2006newton, LVZ:2008} for singular solutions whose Jacobian matrix has arbitrary high-rank deficiency and for overdetermined polynomial systems.
Furthermore, they proved that the number of deflations needed to
derive a regular solution of an augmented system is strictly less
than the multiplicity. Dayton and Zeng \cite{DZ:2005,DLZ:2009} proved that
the depth of the local dual space is a tighter bound for the
number of deflations.  
In \cite{MM:2011}, Mantzaflaris and Mourrain proposed a one-step
deflation method and verified a multiple zero of a nearby system with
a given multiplicity structure
which depends on the accuracy of the
given an approximate multiple zero. 
Hauenstein, Mourrain and Szanto 
proposed a novel deflation method that extends their early works \cite{MM:2011,AHZ2018} to verify the existence of an isolated singular zero with a given multiplicity structure up to a given order ~\cite{HMS2015,HMS2017}. In~\cite{giusti2020approximation}, Giusti and Yakoubsohn proposed a new deflation sequence using the kernel operator defined by the Schur complement of the Jacobian matrix and proved a new $\gamma$-theorem for analytic regular systems. 
In \cite{MMS2020,mantzaflaris2023certified},  Mantzaflaris,  Mourrain, and Szanto proposed a certified iteration method for computing isolated singular roots.
From the perspective of  polynomial homotopy continuation, Verschelde and Viswanathan \cite{verschelde2022locating} considered a way to deal with singularity while tracking a homotopy path.
More recently, the technique of inflation has also been introduced for separating singular zeros in \cite{burr2021inflation,burr2023isolating}.

\paragraph*{\bf Structure of the paper}\quad
Section \ref{sec2} recalls some definitions and introduces the deflation method given in \cite{leykin2006newton} for refining singular solutions.
 In Section \ref{sec3}, we characterize more properties of deflation-one singularity. Section \ref{sect4} proposes the two-step Newton's method for such singular zeros.
In Section \ref{sect5},
we compare the performance of our algorithm with the algorithm in \cite{leykin2006newton} for a list of benchmark examples.

\section{Preliminaries}\label{sec2}

In this section, we introduce some symbolic tools and numerical algorithms for analyzing isolated singular zeros of polynomial systems. 
We begin with the definition of an isolated singular zero.

\begin{defi}\label{isolatedsingularzero}
We say that $\xi\in\mathbb{C}^n$ is an \textit{isolated singular zero} of a polynomial system $f=[f_1,\ldots,f_n]^\top\in \mathbb{C}[X_1, \ldots, X_n]^n$, if
\begin{enumerate}
  \item $\xi$ is an isolated zero of $f$, i.e., there is $r>0$ such that $\xi$ is the only zero of $f$ in $\mathrm{Ball}(\xi,r)$,
  \item $Df(\xi)$ is singular, i.e., $\dim\ker Df(\xi)>0$.
\end{enumerate}
\end{defi}
We introduce two key tools for analyzing isolated singular zeros of polynomial systems: the \textit{local dual space} and the \textit{deflation} method. The local dual space is a hybrid (symbolic-numeric) tool used for computing the multiplicity structure of an isolated singular zero, while the deflation method is a numerical algorithm for reducing the multiplicity of an isolated singular zero. For the deflation method, we particularly focus on an algorithm established in \cite{leykin2006newton}.

\subsection{Local dual space}

The local dual space is a powerful tool to characterize the multiplicity structure of an isolated singular zero for a polynomial system.
Given a sequence $\boldsymbol{\alpha}=(\alpha_1,\dots,\alpha_n)\in \mathbb{N}^n$ and a point $\xi\in\mathbb{C}^n$, we define a differential functional $\mathbf{d}^{\boldsymbol{\alpha}}_{\xi}: \CC[X_1,\dots, X_n] \rightarrow \mathbb{C}$ by
\begin{equation*}
	\mathbf{d}^{\boldsymbol{\alpha}}_{\xi}(g)=\frac{1} {\alpha_1!\cdots
		\alpha_n!}\cdot\frac{\partial^{|\boldsymbol{\alpha}|} g}{\partial
		X_1^{\alpha_1}\cdots \partial X_n^{\alpha_n}}(\xi),\quad\forall g\in \CC[X_1,\dots, X_n]
\end{equation*}
where $|\boldsymbol{\alpha}|=\alpha_1+\cdots+\alpha_n$. To simplify notation, we will often omit $\xi$ when it is clear from context, and use $d_i$ instead of $\frac{\partial}{\partial X_i}$. 
The local dual space of a polynomial system $\f$ at an isolated zero $\xi$ is a subspace of
$\mathfrak{D}_{\xi}=\spanz_\mathbb{C}\{\mathbf{d}^{\alpha}_{\xi}\}$ which is defined by
$\mathcal{D}_{f,\xi}=\{\Lambda\in
\mathfrak{D}_{\xi}\ |\  \Lambda(g)=0, ~\forall g\in I_f\}$
where $I_f$ is the ideal generated by polynomials in $f$. 
In other word, $\Lambda\in\mathcal{D}_{f,\xi}$ if and only if $\Lambda(h^\top\cdot f)=0$ for all $h\in \CC[X_1,\dots, X_n]^n$.
This condition is called the \textit{closedness condition} or the \textit{stability condition} (see \cite[Chapter 8]{stetter2004numerical}).

Let $\mathcal{D}_{f,\xi}^{(k)}$ be the subspace of $\mathcal{D}_{f,\xi}$ containing differential functionals of order at most $k$. We define three constants:
\begin{enumerate}
  \item $\brx=\dim\left(\mathcal{D}_{f,\xi}^{(1)}\right)-\dim\left(\mathcal{D}_{f,\xi}^{(0)}\right)$ (\textit{breadth}),
  \item $\rho=\min\left\{k\mid \dim\left(\mathcal{D}_{f,\xi}^{(k+1)}
      \right)=\dim\left(\mathcal{D}_{f,\xi}^{(k)}
      \right)\right\}$ (\textit{depth}),
  \item $\mu=\dim\left(\mathcal{D}_{f,\xi}^{(\rho)}\right)$ (\textit{multiplicity}).
\end{enumerate}
From the definition, it is clear that $\xi$ is an isolated singular zero of $f$ if and only if $1\leq\brx=\dim\ker Df(\xi)\leq n$ and $0<\rho<\mu<\infty$.

Let $\mathcal{C}_{f,\xi}^{(k)}$ be the subspace of $\mathfrak{D}_{\xi}$ consisting of functionals  $\Lambda$ such that $\Phi_{i}(\Lambda)\in\mathcal{D}_{f,\xi}^{(k-1)}$ for $i=1,\ldots,n$, where $\Phi_i$ is a linear operator defined by $\Phi_i(\mathbf{d}^{\boldsymbol{\alpha}}_{\xi})=\mathbf{d}^{\boldsymbol{\alpha}-\mathbf{e}_i}_{\xi}$ if $\alpha_i>0$, and zero otherwise. Note that $\mathcal{C}_{f,\xi}^{(k)}$ is a closed subspace of $\mathfrak{D}_{\xi}$ consisting of functionals with differentials of order at most $k$. By the closedness condition, we have $\Lambda\in\mathcal{D}_{f,\xi}$ if and only if
$\Phi_{i}(\Lambda)\in\mathcal{D}_{f,\xi}$ for all $i=1,\ldots,n$ and $\Lambda(f)=0$.
Therefore, it is straightforward to get $\mathcal{D}_{f,\xi}^{(k-1)}\subset\mathcal{D}_{f,\xi}^{(k)}\subset\mathcal{C}_{f,\xi}^{(k)}$.

By using $\mathcal{C}_{f,\xi}^{(k)}$ as an intermediate step, it is possible to compute a basis for $\mathcal{D}_{f,\xi}$ in a recursive manner
\[\mathcal{D}_{f,\xi}^{(0)}\nearrow\mathcal{C}_{f,\xi}^{(1)}\searrow\mathcal{D}_{f,\xi}^{(1)}\nearrow\cdots\searrow\mathcal{D}_{f,\xi}^{(\rho)}\nearrow\mathcal{C}_{f,\xi}^{(\rho+1)}\searrow\mathcal{D}_{f,\xi}^{(\rho+1)}\]
where $\nearrow$ and $\searrow$ indicate the process of solving certain linear systems. 
To clarify the meaning of $\searrow$,
it corresponds to solving a linear system to find a basis of $\mathcal{D}_{f,\xi}^{(k)}$ given a basis of $\mathcal{C}_{f,\xi}^{(k)}$.
For example, if we have $\mathcal{C}_{f,\xi}^{(k)}=\spanz_\mathbb{C}\{\Lambda_1,\ldots,\Lambda_{m_k}\}$, then $\mathcal{D}_{f,\xi}^{(k)}$ is isomorphic to the kernel of the matrix $C=[\Lambda_1(f)~\cdots~\Lambda_{m_k}(f)]$.
Trivially, $\rank \, C = \dim\,\mathcal{C}^{(k)}_{\f,\xi}-\dim\,\mathcal{D}^{(k)}_{\f,\xi}\leq n$.

\subsection{Newton's method with deflation}

Deflation is a method to desingularize isolated singular zeros by introducing additional equations and variables to a given system, generating an augmented system with a ``deflated'' multiplicity structure at the isolated singular zero (see \cite{leykin2006newton} for instance). Suppose we are given an isolated singular zero $\xi\in\mathbb{C}^n$ of a polynomial system $\f$ with breadth $\brx$. To construct an augmented system with a new isolated zero, we choose a random matrix $B\in\mathbb{C}^{n\times (n-\brx+1)}$ and a random vector $\bb\in\mathbb{C}^{n-\brx+1}$ such that there exists a unique vector $\boldsymbol{\lambda}=[{\lambda}_1,{\lambda}_2,\dots,{\lambda}_{n-\brx+1}]^\top$ satisfying $(\xi,\boldsymbol{\lambda})$ as an isolated zero of the new system 
\begin{equation}\label{LVZ_deflation}
\begin{bmatrix}
             \f \\
             D\f\cdot B\cdot\boldsymbol{\lambda}\\
             \bb^\top\cdot\boldsymbol{\lambda}-1
           \end{bmatrix}
.
\end{equation}
If $(\xi,\boldsymbol{\lambda})$ remains singular, 
the process is iterated for the augmented system (\ref{LVZ_deflation}) and the augmented zero $(\xi,\boldsymbol{\lambda})$.

When an approximation $\x$ of the singular zero $\xi$ is given, the breadth $\brx$ can be determined by solving SVD with truncating singular values within a tolerance $\tau$, and the vector $\boldsymbol{\lambda}$ can be determined by solving the least square problem. The resulting zero $(x,\boldsymbol{\lambda})$ approximates the exact zero. We refer Algorithm 2 in \cite{leykin2006newton} as LVZ algorithm.

It is known that LVZ algorithm terminates after finitely many steps if $\xi$ is isolated and $\x$ is sufficiently close to $\xi$. In \cite[Theorem 3.1]{leykin2006newton}, it is proved that the number is less than the multiplicity $\mu$. In \cite[Theorem 3]{DZ:2005}, Dayton and Zeng improved the bound to at most $\rho$. Li and Zhi in \cite[Theorem 3.8]{LZ:2011} studied the case of $\kappa=1$ and proved that the number of iterations is always equal to  $\mu-1$. Based on the benchmark examples in \cite{DZ:2005}, we have observed that for many types of polynomial systems with an isolated singular zero, the LVZ algorithm terminates after only one deflation step. Once the deflation process terminates, then Newton's method can be applied to the system (\ref{LVZ_deflation}) to refine the approximation $(x,\boldsymbol{\lambda})$. 

\begin{rmk} \label{rmk:deflationProperties2} We note that when applying Newton's method to the overdetermined system obtained by the deflation algorithm, convergence to a stationary point rather than a global minimum is possible. For example, for an equation $f=x^2$, the augmented system $[x^2,2x]^\top$ obtained by $f$ and $Df$ has the origin as the only regular zero. On the other hand, since $\|[x^2,2x]^\top\|^2=x^4+4x^2$, a stationary point $x=2i$ exists that is not a zero of the augmented system. This type of convergence to a stationary point cannot be avoided, regardless of the distance between the approximation and the isolated zero.
\end{rmk}

\section{Characterizing deflation-one singularity}\label{sec3}

We are interested in isolated singular zeros such that LVZ algorithm terminates by one iteration, which we call deflation-one singularities. Our goal is to characterize such singularities to establish Newton's method to refine them.


We propose an augmented system equivalent to the system (\ref{LVZ_deflation}).
Let $V=[V_1,V_2]\in\mathbb{C}^{n\times n}$ be an invertible matrix satisfying
\begin{enumerate}
  \item $V_1\in\mathbb{C}^{n\times (n-\brx)}$, $\im V_1=\{\ker Df(\xi)\}^{\bot}$,
  \item $V_2\in\mathbb{C}^{n\times \brx}$, $\im V_2=\ker Df(\xi)$.
\end{enumerate} Let $\boldsymbol{\lambda}_2=[\lambda_1,\ldots,\lambda_{\brx}]^\top\in\mathbb{C}^{\brx}$ be a random vector and $\boldsymbol{\lambda}_1=[\lambda_{\kappa+1},\dots, \lambda_n]^\top$ be a vector consisting of $n-\kappa$ variables. Then, there is an augmented system with $2n-\kappa$ variables (that is, $X_1,\dots, X_n,\boldsymbol{\lambda}_1$) and $2n$ equations
\begin{equation*}
\g=\begin{bmatrix}
             \f \\
             D\f\cdot V\cdot\boldsymbol{\lambda}
           \end{bmatrix}
\end{equation*}
where $\boldsymbol{\lambda}=[\boldsymbol{\lambda}_1;\boldsymbol{\lambda}_2]$. Furthermore, $(\xi,0)$ is an isolated zero of $g$.
The Jacobian matrix of $\g$ evaluated at $(\xi,0)$ is 
\begin{equation}\label{deflationM}
Dg(\xi,0)=\begin{bmatrix}
                        Df(\xi) & ~~\mathbf{0} \\
                        D^2f(\xi)\cdot V_2\cdot\boldsymbol{\lambda}_2 & ~~Df(\xi)\cdot V_1
                      \end{bmatrix}\in\mathbb{C}^{2n\times (2n-\brx)},
\end{equation}
where $D^2f(\xi)$ is the $n\times n\times n$ tensor consisting of all second order derivatives of $\f$ at $\xi$.
Therefore, $\xi$ is of deflation-one singularity for $\f$ if and only if $Dg(\xi,0)$ is of full column rank for almost all choices of $\boldsymbol{\lambda}_2\in\mathbb{C}^{\brx}$. We will explain the words  ``almost all''   later in this section. The following example gives an intuitive description of the deflation-one criterion.

\begin{exm}\cite{kobayashi1998numerical}\label{running_example_1}
  The system
  \begin{equation*}
  f(x,y,z) = \begin{bmatrix}
  x^2-x+y+z-2\\
  y^2+x-y+z-2\\
  z^2+x+y-z-2
  \end{bmatrix}
  \end{equation*}
  has an isolated singular zero at $\xi=[1,1,1]^\top$ with $\kappa =2,\rho=2$ and $\mu =4$.
  Then, we have $$Df(\xi)=\begin{bmatrix}
                        1 & 1 & 1 \\
                        1 & 1 & 1 \\
                        1 & 1 & 1
                      \end{bmatrix}.$$ 
                   We choose $V=[V_1,V_2]$, where  $V_1=\begin{bmatrix}
                        1 \\
                        1 \\
                        1
                      \end{bmatrix}$ and  $V_2=\begin{bmatrix}
                        1  & 1\\
                        -1 & 0\\
                        0  & -1
                      \end{bmatrix}$  for the deflation.
 If we set $\boldsymbol{\lambda}_2=\begin{bmatrix}
                        1  \\
                        1
                      \end{bmatrix}$, then $(\xi,0)$ is an isolated regular zero of the augmented system
  \[\g=\begin{bmatrix}
             \f \\
             D\f\cdot V\cdot \begin{bmatrix}
                 \lambda\\
                 \boldsymbol{\lambda}_2
             \end{bmatrix}
           \end{bmatrix}=\begin{bmatrix}
             \f \\
             2x\lambda+4x+\lambda-4 \\
             2y\lambda-2y+\lambda+2 \\
             2z\lambda-2z+\lambda+2
           \end{bmatrix},\]
and its Jacobian matrix
  \[Dg(\xi,0)=\begin{bmatrix}
                        Df(\xi) & ~~\mathbf{0} \\
                        D^2f(\xi)\cdot V_2\cdot\boldsymbol{\lambda}_2 & ~~Df(\xi)\cdot V_1
                      \end{bmatrix}=\begin{bmatrix}
                        1 & 1 & 1 & 0\\
                        1 & 1 & 1 & 0\\
                        1 & 1 & 1 & 0\\
                        4 & 0 & 0 & 3\\
                        0 & -2 & 0 & 3\\
                        0 & 0 & -2 & 3
                      \end{bmatrix}\]
    is of full column rank.  Therefore, $\xi$ is a deflation-one singular zero of $f$.
\end{exm}

For almost all choices of $v\in \ker Df(\xi)$, the following theorem shows that the existence of an invertible linear operator is equivalent to a deflation-one singularity.

\begin{thm}\label{thm:multiplerootCharacterization}
Let $\xi\in\mathbb{C}^n$ be a deflation-one singular zero of a polynomial system $f\in\mathbb{C}[X_1,\dots, X_n]^n$, 
then $Dg(\xi,0)$ is of full column rank for almost all choices of $\boldsymbol{\lambda}_2\in\mathbb{C}^{\brx}$ if and only if the linear operator
\begin{equation}\label{OperatorA}
\mathcal{A}(\xi)=Df(\xi)+D^2f(\xi)(v,\Pi_{\ker Df(\xi)}\cdot)
\end{equation}
is invertible for almost all choices of $v\in \ker Df(\xi)$, where $\Pi_{\ker Df(\xi)}$ is the Hermitian projection to $\ker Df(\xi)$.
\end{thm}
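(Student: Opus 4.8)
The plan is to relate the full-column-rank condition on the $2n \times (2n-\kappa)$ matrix $Dg(\xi,0)$ in \eqref{deflationM} to the invertibility of the $n \times n$ operator $\mathcal{A}(\xi)$ by a block manipulation, exploiting the block-triangular structure of $Dg(\xi,0)$. Write $M = Dg(\xi,0) = \begin{bmatrix} Df(\xi) & 0 \\ D^2f(\xi)\cdot V_2\cdot\boldsymbol{\lambda}_2 & Df(\xi)\cdot V_1 \end{bmatrix}$. First I would observe that since $\im V_1 = \{\ker Df(\xi)\}^{\bot}$ and $Df(\xi)$ restricted to $\{\ker Df(\xi)\}^{\bot}$ is injective with image $\im Df(\xi) = \{\ker Df(\xi)^*\}^{\bot}$ (equivalently $\im U_1$ in the SVD notation), the lower-right block $Df(\xi)\cdot V_1 \in \mathbb{C}^{n\times(n-\kappa)}$ already has full column rank $n-\kappa$, and its column space is exactly $\im Df(\xi)$. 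So $M$ has full column rank $2n-\kappa$ if and only if the first block column, namely $\begin{bmatrix} Df(\xi) \\ D^2f(\xi)\cdot V_2\cdot\boldsymbol{\lambda}_2 \end{bmatrix} \in \mathbb{C}^{2n\times n}$, has full column rank $n$ modulo the column space of $\begin{bmatrix} 0 \\ Df(\xi)V_1\end{bmatrix}$; that is, if and only if there is no nonzero $w\in\mathbb{C}^n$ with $Df(\xi)\,w = 0$ and $D^2f(\xi)(V_2\boldsymbol{\lambda}_2,\,w) \in \im Df(\xi)$.

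Next I would translate this kernel condition into the language of $\mathcal{A}(\xi)$. The condition $Df(\xi)w = 0$ means $w \in \ker Df(\xi) = \im V_2$, so $\Pi_{\ker Df(\xi)} w = w$ and $D^2f(\xi)(v, \Pi_{\ker Df(\xi)} w) = D^2f(\xi)(v,w)$ for the choice $v = V_2\boldsymbol{\lambda}_2$; hence on such $w$ we have $\mathcal{A}(\xi)\,w = Df(\xi)w + D^2f(\xi)(v,w) = D^2f(\xi)(v,w)$. Now I claim $\mathcal{A}(\xi)$ is invertible if and only if the only $w$ with $Df(\xi)w=0$ and $D^2f(\xi)(v,w)\in\im Df(\xi)$ is $w=0$. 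For the forward direction: if $\mathcal{A}(\xi)$ is invertible and $w$ satisfies the two conditions, then writing $w = w_1 + w_2$ along $\{\ker Df(\xi)\}^{\bot}\oplus\ker Df(\xi)$ we get $w_1 = 0$, and $D^2f(\xi)(v,w_2) = Df(\xi)u$ for some $u$; then $\mathcal{A}(\xi)(w_2 - u) = D^2f(\xi)(v,w_2) - Df(\xi)u - D^2f(\xi)(v,\Pi_{\ker}u)$. This does not immediately collapse, so I would instead argue more carefully using the SVD decomposition $U = [U_1,U_2]$, $V = [V_1,V_2]$: decompose both the source and target of $\mathcal{A}(\xi)$ into the $V$- and $U$-adapted coordinates and write $\mathcal{A}(\xi)$ as a $2\times 2$ block matrix whose $(1,1)$ block is $\Sigma_1$ (invertible), $(1,2)$ block is $U_1^* D^2f(\xi)(v,\cdot)V_2$, $(2,1)$ block is $0$ (since $Df(\xi)V_1$ has image in $\im U_1$ and $D^2f(\xi)(v,\Pi_{\ker}\cdot)V_1 = 0$ because $\Pi_{\ker}V_1 = 0$), and $(2,2)$ block is exactly $U_2^* D^2f(\xi)(v,\cdot)V_2 = \mathcal{B}(\xi)$. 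Thus $\mathcal{A}(\xi)$ is invertible if and only if $\mathcal{B}(\xi)$ is — and the same $\mathcal{B}(\xi)$ governs the kernel condition for $M$, since $D^2f(\xi)(v,w)\in\im Df(\xi)=\im U_1$ is equivalent to $U_2^* D^2f(\xi)(v,w) = 0$, i.e. $\mathcal{B}(\xi)(V_2^* w) = 0$.

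Finally, I would handle the quantifier ``for almost all'': the entries of $\mathcal{B}(\xi)=U_2^* D^2f(\xi)(v,\cdot)V_2$ depend linearly (hence polynomially) on $v\in\ker Df(\xi)$, parametrized by $\boldsymbol{\lambda}_2$ via $v = V_2\boldsymbol{\lambda}_2$; likewise $\det Dg(\xi,0)$-type maximal-minor conditions are polynomial in $\boldsymbol{\lambda}_2$. So ``$Dg(\xi,0)$ has full column rank for almost all $\boldsymbol{\lambda}_2$'' means the polynomial $\det\mathcal{B}(\xi)$ (in $\boldsymbol{\lambda}_2$) is not identically zero, which is exactly the same non-vanishing condition as ``$\mathcal{A}(\xi)$ is invertible for almost all $v\in\ker Df(\xi)$'', using the bijection $v\leftrightarrow\boldsymbol{\lambda}_2$ and the fact that a polynomial that vanishes on a Zariski-dense set is identically zero. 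I expect the main obstacle to be the bookkeeping in the block decomposition of $\mathcal{A}(\xi)$ — in particular verifying cleanly that the $(2,1)$ block vanishes and that the $(2,2)$ block is literally $\mathcal{B}(\xi)$ — together with making the ``almost all'' equivalence rigorous by identifying the two polynomial non-vanishing loci; the linear-algebra core (Schur complement reduction from $2n-\kappa$ down to $\kappa$) is routine once the setup is in place, and indeed this is presumably how Theorem~\ref{opB} is proved as well.
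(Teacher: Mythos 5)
Your proposal is correct. It takes the same basic approach as the paper --- elementary kernel analysis of the block-triangular matrix $Dg(\xi,0)$ using the SVD of $Df(\xi)$ --- but it is organized differently: you reduce ``$Dg(\xi,0)$ has full column rank'' to the invertibility of the $\kappa\times\kappa$ matrix $\mathcal{B}(\xi)=U_2^*\cdot D^2f(\xi)\cdot v\cdot V_2$ (via the observation that $D^2f(\xi)(v,w)\in\im Df(\xi)$ iff $U_2^*D^2f(\xi)(v,w)=0$), and separately show $\mathcal{A}(\xi)$ is invertible iff $\mathcal{B}(\xi)$ is, by computing that $U^*\mathcal{A}(\xi)V$ is block upper triangular with diagonal blocks $\Sigma_1$ and $\mathcal{B}(\xi)$. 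That second computation is verbatim the paper's proof of Theorem~\ref{opB}, so your argument effectively proves Theorems~\ref{thm:multiplerootCharacterization} and~\ref{opB} in one pass. The paper's own proof of Theorem~\ref{thm:multiplerootCharacterization} is more direct and never mentions $\mathcal{B}(\xi)$: it parametrizes $\ker Dg(\xi,0)$ by vectors of the form $[V_2 w_1;\,w_2]$ (any kernel element must have this form because the top block row forces the first $n$ coordinates into $\ker Df(\xi)=\im V_2$) and checks that $Dg(\xi,0)[V_2w_1;\,w_2]=0$ is literally the equation $\mathcal{A}(\xi)\cdot V\cdot w=0$, giving a bijection of kernels. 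Both routes are valid; yours costs a bit more bookkeeping for this theorem alone but buys the smaller certificate $\mathcal{B}(\xi)$ at the same time, and you are also more explicit than the paper about why the two ``almost all'' quantifiers match (the pointwise equivalence under the bijection $v=V_2\boldsymbol{\lambda}_2$ makes the two exceptional sets correspond, which the paper leaves implicit and only revisits in the discussion of $h(\boldsymbol{\lambda}_2)=\det\mathcal{B}(\xi)$ after Theorem~\ref{opB}). Your abandoned first attempt at the forward direction is harmless since you correctly replace it with the block-triangular argument.
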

\begin{proof}
Consider the singular value decomposition 
$Df(\xi)=U\cdot\Sigma\cdot V^*$. Assume $U=[U_1,U_2]$, $V=[V_1,V_2]$, where  $U_2,V_2\in\mathbb{C}^{n\times \brx}$,   $\im V_1=\{\ker Df(\xi)\}^{\bot}$ and   $\im V_2=\ker Df(\xi)$. Furthermore, we let $v=V_2\cdot\boldsymbol{\lambda}_2\in\ker Df(\xi)$.  Then, we have
\begin{equation*}
	\mathcal{A}(\xi)=Df(\xi)+D^2f(\xi)\cdot v \cdot V_2 \cdot V_2^*.
\end{equation*}
  Let $w=[
  w_1,
  w_2
 ]^\top\in\mathbb{C}^{n}$ where $w_1\in\mathbb{C}^{\brx}$, $w_2\in\mathbb{C}^{n-\brx}$. Then, we have
  \begin{align*}
  &Dg(\xi,0)\text{ is of full column rank},\\
  \Leftrightarrow~& Dg(\xi,0)\begin{bmatrix}
  V_2\cdot w_1 \\
  w_2
  \end{bmatrix}
  =0\mbox{ implies }w=0, \\
  \Leftrightarrow~& (D^2f(\xi)\cdot V_2\cdot\boldsymbol {\lambda}_2)\cdot(V_2\cdot w_1)+Df(\xi)\cdot V_1 \cdot w_2=0\mbox{ implies }w=0, \\
  \Leftrightarrow~&  \mathcal{A}(\xi)\cdot V \cdot w=0\mbox{ implies }w=0,\\
  \Leftrightarrow~& \mathcal{A}(\xi)\mbox{ is invertible}.
  \end{align*}  It concludes the desired result. 
\end{proof}

The above theorem confirms that the deflation-one singularity can be characterized by the invertibility of an $n \times n$ matrix  $\opA(\xi)$ instead of the full-column rank condition of the $2n\times (2n-\brx)$ Jacobian matrix  $Dg(\xi,0)$. 
The following theorem reveals that the invertibility of a smaller matrix can be used to characterize the deflation-one singularity.

\begin{thm}\label{opB}
Consider the singular value decomposition $Df(\xi)=U\cdot \Sigma \cdot V^*$ with $U=[U_1,U_2]$, $V=[V_1,V_2]$, where  $U_2,V_2\in\mathbb{C}^{n\times \brx}$,   $\im V_1=\{\ker Df(\xi)\}^{\bot}$ and   $\im V_2=\ker Df(\xi)$.
Then for $v\in \ker Df(\xi)$, a $\kappa\times \kappa$ matrix
\begin{equation}\label{OperatorB}
	\opB(\xi)=U_2^*\cdot D^2\f(\xi)\cdot v\cdot V_2.
\end{equation}
is invertible if and only if $\opA(\xi)$ is invertible.
\end{thm}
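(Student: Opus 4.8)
The plan is to conjugate $\opA(\xi)$ by the unitary factors of the singular value decomposition of $Df(\xi)$ and read off a block–triangular form whose lower–right block is exactly $\opB(\xi)$. First I would recall, as in the proof of Theorem~\ref{thm:multiplerootCharacterization}, that since the columns of $V_2$ form an orthonormal basis of $\ker Df(\xi)$ one has $\Pi_{\ker Df(\xi)}=V_2\cdot V_2^*$, so that
\begin{equation*}
\opA(\xi)=Df(\xi)+D^2f(\xi)\cdot v\cdot V_2\cdot V_2^*.
\end{equation*}
Because $U$ and $V$ are unitary, $\opA(\xi)$ is invertible if and only if $U^*\cdot\opA(\xi)\cdot V$ is invertible, so it suffices to analyze the latter matrix.

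Next I would compute $U^*\cdot\opA(\xi)\cdot V$ block by block, using the partitions $U=[U_1,U_2]$, $V=[V_1,V_2]$. Writing $\Sigma=\diag(\Sigma_1,0)$ with $\Sigma_1=\diag(\sigma_1,\ldots,\sigma_{n-\kappa})$ invertible, one has $U^*\cdot Df(\xi)\cdot V=\Sigma$. For the second summand, the orthogonality relations $V_2^*\cdot V_1=0$ and $V_2^*\cdot V_2=I_\kappa$ give $V_2^*\cdot V=[\,0\ \ I_\kappa\,]$, hence
\begin{equation*}
U^*\cdot D^2f(\xi)\cdot v\cdot V_2\cdot V_2^*\cdot V
=\begin{bmatrix} U_1^*\cdot D^2f(\xi)\cdot v\cdot V_2 \\ U_2^*\cdot D^2f(\xi)\cdot v\cdot V_2 \end{bmatrix}\cdot[\,0\ \ I_\kappa\,]
=\begin{bmatrix} 0 & U_1^*\cdot D^2f(\xi)\cdot v\cdot V_2 \\ 0 & \opB(\xi) \end{bmatrix}.
\end{equation*}
Adding the two contributions yields
\begin{equation*}
U^*\cdot\opA(\xi)\cdot V=\begin{bmatrix} \Sigma_1 & U_1^*\cdot D^2f(\xi)\cdot v\cdot V_2 \\ 0 & \opB(\xi) \end{bmatrix},
\end{equation*}
which is block upper triangular.

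From here the conclusion is immediate: a block upper triangular matrix is invertible precisely when both diagonal blocks are, so $U^*\cdot\opA(\xi)\cdot V$ is invertible if and only if $\Sigma_1$ and $\opB(\xi)$ are both invertible; since $\Sigma_1$ collects the nonzero singular values of $Df(\xi)$ it is always invertible, and the claim follows. The argument is entirely mechanical, and I do not expect a genuine obstacle; the only point that needs to be stated carefully is the identification $\Pi_{\ker Df(\xi)}=V_2\cdot V_2^*$ together with the resulting relations $V_2^*\cdot V_1=0$, $V_2^*\cdot V_2=I_\kappa$, which are what annihilate the first block column of the perturbation and produce the triangular shape. As an alternative one could avoid conjugation and argue directly that $\opA(\xi)\cdot V_2\cdot w=0$ forces $\opB(\xi)\cdot w=0$ after left multiplication by $U_2^*$ (and conversely, tracking the $V_1$–component), but the block–matrix route is cleaner and dovetails with the form of $Dg(\xi,0)$ used in Theorem~\ref{thm:multiplerootCharacterization}.
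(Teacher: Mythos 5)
Your proposal is correct and follows essentially the same route as the paper: the paper's proof is precisely the computation of $U^{*}\cdot\opA(\xi)\cdot V$ as the block upper triangular matrix $\left[\begin{smallmatrix}\Sigma_1 & U_1^*\cdot D^2\f(\xi)\cdot v\cdot V_2\\ 0 & \opB(\xi)\end{smallmatrix}\right]$, from which invertibility of $\opA(\xi)$ is equivalent to that of $\opB(\xi)$ since $\Sigma_1$ is invertible. Your write-up just spells out the orthogonality relations that the paper leaves implicit.
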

\begin{proof}
It is a straightforward result since
$$U^{\ast}\cdot \mathcal{A}(\xi)\cdot V=\left[
                        \begin{array}{c}
                          U_1^{\ast} \\
                          U_2^{\ast}
                        \end{array}
                      \right]\cdot\left(Df(\xi)+D^2f(\xi)\cdot v \cdot V_2\cdot V_2^*\right)\cdot[V_1,V_2]=\left[
                        \begin{array}{cc}
                          \Sigma_1~~ & U_1^*\cdot D^2\f(\xi)\cdot v\cdot V_2 \\
                          \boldsymbol{0}~~ & \opB(\xi) \\
                        \end{array}
                      \right],
$$
where $\Sigma_1=\diag(\sigma_1,\ldots,\sigma_{n-\brx})$ consists of the first $n-\kappa$ singular values of $Df(\xi)$.
\end{proof}

\begin{exm}
[Example \ref{running_example_1} continued] Consider the singular value decomposition
\renewcommand\arraystretch{1.5}
\[Df(\xi)=U\cdot\Sigma\cdot V^*=\begin{bmatrix}
                        -\frac{\sqrt{3}}{3} & \frac{\sqrt{2}}{2} & \frac{\sqrt{6}}{6} \\
                        -\frac{\sqrt{3}}{3} & 0 & -\frac{\sqrt{6}}{3} \\
                        -\frac{\sqrt{3}}{3} & -\frac{\sqrt{2}}{2} & \frac{\sqrt{6}}{6}
                      \end{bmatrix} \cdot\begin{bmatrix}
                        3& 0 & 0 \\
                        0&0&0\\
                        0&0&0
                        \end{bmatrix}\cdot
                       \begin{bmatrix}
                        -\frac{\sqrt{3}}{3} & -\frac{\sqrt{3}}{3} & -\frac{\sqrt{3}}{3} \\
                        \frac{\sqrt{2}}{2} & 0 & -\frac{\sqrt{2}}{2} \\
                        -\frac{\sqrt{6}}{6} & \frac{\sqrt{6}}{3} & -\frac{\sqrt{6}}{6}
                      \end{bmatrix}\]
                  of $Df(\xi)$.
    Choosing $v=[2,-1,-1]^\top$, we have
     \renewcommand\arraystretch{1.5} 
    \[\mathcal{A}(\xi)=\begin{bmatrix}
                        \frac{11}{3} & -\frac{1}{3} & -\frac{1}{3} \\ 
                        \frac{5}{3} & -\frac{1}{3} & \frac{5}{3} \\
                        \frac{5}{3} & \frac{5}{3} & -\frac{1}{3}
                      \end{bmatrix},~\mathcal{B}(\xi)=\begin{bmatrix}
                        1 & -\sqrt{3} \\
                        \sqrt{3} & 1
                      \end{bmatrix}. \]
  The matrices  $\mathcal{A}(\xi), \mathcal{B}(\xi)$   are both invertible. Hence, $\xi$ is a deflation-one singular zero of $f$.
\end{exm}
The sizes of the matrices $Dg(\xi,0)$, $\mathcal{A}(\xi)$, and $\mathcal{B}(\xi)$ decrease from $6\times 4$, $3\times 3$ to $2\times 2$, respectively. Therefore, using $\mathcal{B}(\xi)$ would result in a more efficient algorithm for refining a singularity.

\begin{rmk}\label{tolerance_qualitative}
Since the exact singular solution $\xi$ is unknown,  the two-step Newton's method starts with an approximation $x$. Assuming $x$ is sufficiently close to $\xi$, the singular value decomposition of $Df(x)$ is used to estimate the breadth $\brx$ and approximate the kernel of $Df(\xi)$, and  to construct the matrix $\opB(x)$. Note that $\opB(x)$ remains 
invertible as long as $x-\xi$ is sufficiently small and $\brx$ is accurately estimated.
\end{rmk}

The invertibility of $\opB(\xi)$ also clarifies the meaning of ``for almost all choices of $v\in \ker Df(\xi)$'' and ``for almost all choices of $\boldsymbol{\lambda}_2\in\mathbb{C}^{\brx}$'' in Theorem \ref{thm:multiplerootCharacterization}.
Suppose $v=V_2\cdot\boldsymbol{\lambda}_2\in\ker Df(\xi)$, then we can view $h(\boldsymbol{\lambda}_2)=\det \opB(\xi)$ as a homogeneous polynomial in the variables $\lambda_1,\ldots,\lambda_{\brx}$. It is evident that $\opB(\xi)$ is invertible if and only if $\boldsymbol{\lambda}_2\in\mathbb{C}^{\brx}\setminus h^{-1}(0)$. Hence, if $h$ is not identically zero, then $h^{-1}(0)$ is an algebraic variety of dimension $\brx-1$, implying that $\opB(\xi)$ is invertible for almost all choices of $v\in \ker Df(\xi)$. Thus, if $h$ is not identically zero, $\opA(\xi)$ is invertible for almost all choices of $v\in \ker Df(\xi)$, such that $Dg(\xi,0)$ is of full column rank for almost all choices of $\boldsymbol{\lambda}_2\in\mathbb{C}^{\brx}$.
Based on the above analysis, one necessary condition of deflation-one singularity can be proposed using the local dual space $\mathcal{D}_{\f,\xi}$ and its closed subspace $\mathcal{C}_{\f,\xi}$.

\begin{cor}\label{neccesary}
If $\mathcal{B}(\xi)$ is invertible for almost all choices of $v\in \ker Df(\xi)$, then $\dim\,\mathcal{C}^{(2)}_{\f,\xi}-\dim\,\mathcal{D}^{(2)}_{\f,\xi}=n$.
\end{cor}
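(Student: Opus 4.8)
The plan is to connect the linear-algebra characterization of deflation-one singularity (via $\opB(\xi)$, Theorem~\ref{opB}) to the dimension count for the local dual space, using the rank argument sketched at the end of the Local dual space subsection. Recall that for any closed subspace $\mathcal{C}^{(k)}_{\f,\xi} = \spanz_\mathbb{C}\{\Lambda_1,\ldots,\Lambda_{m_k}\}$, one has $\mathcal{D}^{(k)}_{\f,\xi} \cong \ker[\Lambda_1(\f)~\cdots~\Lambda_{m_k}(\f)]$, so that $\dim\,\mathcal{C}^{(k)}_{\f,\xi} - \dim\,\mathcal{D}^{(k)}_{\f,\xi}$ equals the rank of that matrix, which is at most $n$. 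So the corollary amounts to showing that, under the hypothesis, this rank is \emph{exactly} $n$ for $k=2$; equivalently, the $n$ rows of the ``obstruction matrix'' arising from applying order-$\leq 2$ functionals in $\mathcal{C}^{(2)}_{\f,\xi}$ to $\f$ are linearly independent.

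First I would make the bridge between $\mathcal{C}^{(2)}_{\f,\xi}$ and the data $(Df(\xi), D^2f(\xi))$ explicit. A functional $\Lambda \in \mathfrak{D}_\xi$ of order $\leq 2$ can be written $\Lambda = c\cdot\mathbf{d}^{\bzero} + \sum_i a_i\,\mathbf{d}^{\be_i} + \sum_{i\leq j} b_{ij}\,\mathbf{d}^{\be_i+\be_j}$. The condition $\Lambda \in \mathcal{C}^{(2)}_{\f,\xi}$ means $\Phi_i(\Lambda) \in \mathcal{D}^{(1)}_{\f,\xi}$ for all $i$; writing this out, the first-order parts must land in $\mathcal{D}^{(1)}_{\f,\xi}$, which forces the vector $(a_1,\ldots,a_n)$ to lie in $\ker Df(\xi)$ and couples the quadratic coefficients $b_{ij}$ to that kernel vector through $Df(\xi)$. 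In other words, modulo the lower-order functionals that already lie in $\mathcal{D}^{(1)}_{\f,\xi}$, an element of $\mathcal{C}^{(2)}_{\f,\xi}$ is parametrized by a choice of $v = (a_i) \in \ker Df(\xi)$ together with a solution of a linear system in the $b_{ij}$; the residual obstruction to such a $\Lambda$ actually lying in $\mathcal{D}^{(2)}_{\f,\xi}$, i.e. to $\Lambda(\f)=0$, is governed precisely by the quadratic form $D^2f(\xi)(v,\cdot)$ paired against $\ker Df(\xi)$ — which is exactly the content of the operator $\opB(\xi)=U_2^*\cdot D^2\f(\xi)\cdot v\cdot V_2$.

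Next I would count. Using the block factorization $U^*\cdot\opA(\xi)\cdot V = \bigl[\begin{smallmatrix}\Sigma_1 & \ast\\ \bzero & \opB(\xi)\end{smallmatrix}\bigr]$ from the proof of Theorem~\ref{opB}: the $\Sigma_1$ block is always full rank $n-\brx$ (it records the order-$1$ contribution, i.e. $\rank Df(\xi) = n-\brx = \dim\mathcal{C}^{(1)}_{\f,\xi} - \dim\mathcal{D}^{(1)}_{\f,\xi}$), and the hypothesis that $\opB(\xi)$ is invertible for generic $v$ supplies the remaining $\brx$ independent rows coming from the quadratic part. Assembling, the order-$\leq 2$ obstruction matrix has rank $(n-\brx) + \brx = n$, which gives $\dim\,\mathcal{C}^{(2)}_{\f,\xi}-\dim\,\mathcal{D}^{(2)}_{\f,\xi}=n$. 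Alternatively, and perhaps cleaner to write, one can run this through the augmented system $\g$: full column rank of $Dg(\xi,0)$ is equivalent to invertibility of $\opB(\xi)$ (Theorems~\ref{thm:multiplerootCharacterization} and \ref{opB}), and full column rank of $Dg(\xi,0)$ says that the combined linear system defining $\mathcal{D}^{(2)}_{\f,\xi}$ inside $\mathcal{C}^{(2)}_{\f,\xi}$ has maximal rank $n$.

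The main obstacle I anticipate is the bookkeeping in the first bridging step: setting up the correct identification between bases of $\mathcal{C}^{(2)}_{\f,\xi}$, $\mathcal{D}^{(1)}_{\f,\xi}$, $\mathcal{D}^{(2)}_{\f,\xi}$ and the block structure of $U^*\opA(\xi)V$, and in particular checking that the ``generic $v$'' in the hypothesis on $\opB(\xi)$ is the same genericity that one needs when choosing $\boldsymbol{\lambda}_2$ to witness the rank of the dual obstruction matrix — this is exactly the $h(\boldsymbol{\lambda}_2)=\det\opB(\xi)$ discussion preceding the corollary, so it should go through, but it is where the argument needs care. Everything after that identification is the elementary rank addition $(n-\brx)+\brx=n$ together with the observation that the rank is in any case bounded above by $n$.
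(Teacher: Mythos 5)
Your proposal is correct and follows essentially the same route as the paper: both identify $\dim\,\mathcal{C}^{(2)}_{\f,\xi}-\dim\,\mathcal{D}^{(2)}_{\f,\xi}$ with the rank of the matrix $C_2=\big[0~~Df(\xi)~~D^2\f(\xi)\cdot v_i\cdot v_j~\cdots\big]$ and exploit the factorization of $\opB(\xi)$ through $C_2$; the only difference is direction, since the paper argues the contrapositive (rank $<n$ yields a left-kernel vector of $C_2$, which necessarily lies in $\im U_2$ and forces $\det \opB(\xi)\equiv 0$), whereas you argue forward ($\im\opA(\xi)\subseteq\im C_2$, so invertibility of $\opA(\xi)$ for some $v$ forces $\rank C_2=n$). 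One sentence of your bridging step is incorrect, though harmlessly so: membership of $\Lambda$ in $\mathcal{C}^{(2)}_{\f,\xi}$ places no constraint on the first-order coefficients $(a_1,\ldots,a_n)$, since all of $d_1,\ldots,d_n$ already lie in $\mathcal{C}^{(1)}_{\f,\xi}$; because $f(\xi)=0$, the condition $\Phi_i(\Lambda)(f)=0$ constrains only the second-order coefficients, forcing the associated symmetric form to be supported on $\ker Df(\xi)$, which is exactly what produces the columns $D^2\f(\xi)\cdot v_i\cdot v_j$ of $C_2$. Your final count $(n-\brx)+\brx=n$ implicitly uses the correct structure (it keeps the full-rank $Df(\xi)$ block contributed by $d_1,\ldots,d_n$), so the argument goes through as intended.
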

\begin{proof}
 Since $\dim\,\mathcal{C}^{(2)}_{\f,\xi}-\dim\,\mathcal{D}^{(2)}_{\f,\xi}\leq n$, we only need to prove that $\det \opB(\xi) \equiv0$ if $\dim\,\mathcal{C}^{(2)}_{\f,\xi}-\dim\,\mathcal{D}^{(2)}_{\f,\xi}<n$.
Since $\mathcal{C}_{f,\xi}^{(1)}=\spanz_\mathbb{C}\{1,d_{1},\ldots,d_n\}$, then $\mathcal{D}_{f,\xi}^{(1)}$ is isomorphic to the kernel of
$C_{1}=[0~Df(\xi)]$
which is the image of $\begin{bmatrix}1 & 0 \\ 0 & V_2\end{bmatrix}$. As $\brx>0$, letting $V_2=[v_1,\ldots,v_{\brx}]$,
we have that $\mathcal{D}^{(2)}_{\f,\xi}$ is isomorphic to the kernel of
\[C_{2}=\bigg[0~Df(\xi)~D^2\f(\xi)\cdot v_1 \cdot v_1~\cdots~D^2\f(\xi)\cdot v_1 \cdot v_{\brx}~\cdots~D^2\f(\xi)\cdot v_{\brx} \cdot v_{\brx}\bigg]\]
satisfying that $\rank\,C_{2} = \dim\,\mathcal{C}^{(2)}_{\f,\xi}-\dim\,\mathcal{D}^{(2)}_{\f,\xi}$.

On the other hand, $\opB(\xi)$ can be rewritten in the form of
\[\opB(\xi)=U_2^* \cdot C_2\cdot \left[
                        \begin{array}{l}
                          ~~0_{(n+1)\times\brx} \\
                          \big[\boldsymbol{\lambda}_2\big]_{\brx(\brx-1)/2\times\brx}
                        \end{array}
                      \right],\]
where $\big[\boldsymbol{\lambda}_2\big]$ is some matrix of size $\frac{\brx(\brx-1)}{2}\times\brx$ consisting of  $\lambda_1,\ldots,\lambda_{\brx}$ and $0$ such that the equality holds.

Finally, if $\dim\,\mathcal{C}^{(2)}_{\f,\xi}-\dim\,\mathcal{D}^{(2)}_{\f,\xi}<n$, then $\dim\,\ker C_2^*>0$. Moreover, since we have $\ker C_2^*\subset \ker Df(\xi)^*=\im U_2$, it follows that $\dim\,\ker (C_2^*\cdot U_2)>0$. Therefore, $\det \opB(\xi) \equiv0$.
\end{proof}

We conclude this section by presenting several examples that illustrate important properties of the deflation-one singularity.

\begin{exm}
\begin{enumerate}
	\item \label{exm:2}
The deflation terminates in one iteration doesn't mean that the zero is isolated.
	The system
	\begin{equation*}
		f(x,y) = \begin{bmatrix}
			x^2\\
			xy
		\end{bmatrix}
	\end{equation*}
	has a non-isolated singular zero $\xi$ at the origin.
Then, we have that
	$$\mathcal{C}_{f,\xi}^{(2)}=\spanz_\mathbb{C}\left\{1,d_{1},d_2,d_1^2,d_1d_2,d_2^2\right\},$$
	and $\mathcal{D}_{f,\xi}^{(2)}$ is isomorphic to the kernel of
	$C_{2}=[0_{2\times1}~0_{2\times2}~e_1~e_2~0_{2\times1}]$.
	Therefore,
	\[\opB(\xi)=C_2\cdot \left[
	\begin{array}{cc}
		0_{3\times1} & 0_{3\times1} \\
		\lambda_1 & 0 \\
		\lambda_2 & \lambda_1 \\
		0 & \lambda_2
	\end{array}
	\right]=\left[
	\begin{array}{cc}
		\lambda_1 & 0 \\
		\lambda_2 & \lambda_1
	\end{array}
	\right],\]
	so that $\det \opB(\xi) = \lambda_1^2 \not\equiv0$. Thus, the zero $\xi$ requires only one iteration of the deflation. However, it is an embedded zero on the one-dimensional variety $x=0$. More results for the deflation on embedded singularities can be found in \cite{leykin2008numerical,Hauenstein2013}.
	\item 	\label{exm:1} The converse of Corollary \ref{neccesary} is not true in general. The system
	\begin{equation*}
		f(x,y,z) = \begin{bmatrix}
			x^2\\
			z^3 + xy\\
			y^2
		\end{bmatrix}
	\end{equation*}
	has an isolated singular zero $\xi$ at the origin
	with $\kappa =3,\rho=5$ and $\mu =12$.
	Note that
	$\mathcal{D}_{f,\xi}^{(2)}$ is isomorphic to the kernel of
	$C_{2}=[0_{3\times1}~0_{3\times3}~e_1~e_2~0_{3\times1}~e_3~0_{3\times1}~0_{3\times1}]$ since  $$\mathcal{C}_{f,\xi}^{(2)}=\spanz_\mathbb{C}\left\{1,d_{1},d_2,d_3,d_1^2,d_1d_2,d_1d_3,d_2^2,d_2d_3,d_3^2\right\}.$$ It means that $\rank\,C_{2} = \dim\,\mathcal{C}^{(2)}_{\f,\xi}-\dim\,\mathcal{D}^{(2)}_{\f,\xi}=3$. However, we have
	\[\opB(\xi)
=\left[
	\begin{array}{ccc}
		\lambda_1 & 0 & 0 \\
		\lambda_2 & \lambda_1 & 0\\
		0 & \lambda_2 & 0
	\end{array}
	\right],\]
	which implies that $\det \opB(\xi) \equiv0$. Consequently, the origin is not a deflation-one singular zero of $f(x,y,z)=0$ even though $\dim\,\mathcal{C}^{(2)}_{\f,\xi}-\dim\,\mathcal{D}^{(2)}_{\f,\xi}=3$. It requires two steps of deflation to regularize the zero.
 \item \label{running_example_2} It has been shown in \cite[Corollary 3]{DLZ:2009} that for an isolated singular zero of an analytic system, the same multiplicity structure is presented as in the truncated polynomial system from its Taylor series up to a certain order at the same zero. Hence, we may extend the concept of deflation-one singularity to analytic systems. Consider the system from \cite[Example 3]{DLZ:2009}
 	\begin{equation*}
 		f(x,y,z) = \begin{bmatrix}
 			{x}^{3}+z\sin \left( y \right)\\
 			{y}^{3}+x\sin \left( z \right)\\
 			{z}^{3}+y\sin \left( x \right)
 		\end{bmatrix}
 	\end{equation*}
 	has an isolated singular zero $\xi$ at the origin with $\kappa =3,\rho=4$ and $\mu =11$. 
 	We have
 	\[\opB(\xi)
 	=\left[
 	\begin{array}{ccc}
 		0 & \lambda_3 & \lambda_2 \\
 		\lambda_3 & 0 & \lambda_1\\
 		\lambda_2 & \lambda_1 & 0
 	\end{array}
 	\right]\]
 	which concludes that $\det \opB(\xi) = 2\lambda_1\lambda_2\lambda_3\not\equiv0$. Thus, $\xi$ is a deflation-one singular zero of $f(x,y,z)=0$.
 \end{enumerate}	
\end{exm}

\section{Two-step Newton's method}\label{sect4}

We present a two-step Newton's method for refining deflation-one singular zeros, and prove its quadratic convergence when an approximation $x$ is sufficiently close to the exact solution $\xi$.
To establish the quadratic convergence of the two-step Newton's method, we need to assume that $x$ satisfies the following assumption:

\begin{ass}\label{ass}
For a given polynomial system $f\in\mathbb{C}[X_1, \ldots, X_n]^n$ with a deflation-one singular zero $\xi\in\mathbb{C}^n$ and $\brx=\dim\ker Df(\exact)$, assume that a point $x\in\mathbb{C}^n$ satisfies that $\epsilon=\|x-\xi\|$, where $0<\epsilon<1$ is sufficiently small. 
\end{ass}
A concrete range for the value of $\epsilon$ can be obtained by convergence analysis for Newton's method (see \cite[Chapter 8]{Blum:1997}). For isolated singular solutions, it was done for multiplicity $2$ case \cite{Dedieu2001on} and for $\kappa=1$ case \cite{HJLZ2020}. We do not pursue the convergence analysis for deflation-one singularity in this paper.

\subsection{Perturbations}\label{sec:assumptions}

To analyze the convergence of Newton's method, we recall some known results in \cite{weyl,wedin}. 
Let $A\in\CC^{n\times n}$ be an $n\times n$ matrix with its singular value decomposition $A=U\cdot\Sigma \cdot V^*$ where $\Sigma=\diag(\sigma_1,\ldots,\sigma_n)$ with $\sigma_1\geq\cdots\geq\sigma_n$. Consider a perturbation $\tilde{A}=A+E$ of $A$ and its singular value decomposition $\tilde{A}=\tilde{U}\cdot \tilde{\Sigma} \cdot\tilde{V}^*$ where $\tilde{\Sigma}=\diag(\tilde{\sigma}_1,\ldots,\tilde{\sigma}_n)$ with $\tilde{\sigma}_1\geq\cdots\geq\tilde{\sigma}_n$. Weyl gave the error bounds for the paring singular values in \cite{weyl}:
\begin{equation}\label{sigma}
  |\tilde{\sigma}_i-\sigma_i|\leq\|E\|,~i=1,\ldots,n.
\end{equation}
Note that the norm used here is $2$-norm. For the rest of the paper, the norm notation denotes either $2$-norm or F-norm without distinction since they are equivalent to each other and are unitarily invariant.

Since singular vectors corresponding to a cluster of singular
values may be unstable, error comparisons are made instead for singular subspaces corresponding to the cluster of singular
values. Let
\[A=[U_1,U_2]\cdot\left[
               \begin{array}{cc}
                 \Sigma_1 & 0 \\
                 0 & \Sigma_2 \\
               \end{array}
             \right]\cdot\left[
               \begin{array}{c}
                 V_1^*  \\
                 V_2^* \\
               \end{array}
             \right]\mbox{ and }\tilde{A}=[\tilde{U}_1,\tilde{U}_2]\cdot\left[
               \begin{array}{cc}
                 \tilde{\Sigma}_1 & 0 \\
                 0 & \tilde{\Sigma}_2 \\
               \end{array}
             \right]\cdot\left[
               \begin{array}{c}
                 \tilde{V}_1^*  \\
                 \tilde{V}_2^* \\
               \end{array}
             \right],
\]
where $U_2,V_2,\tilde{U}_2,\tilde{V}_2\in\CC^{n\times\brx}$ and $\Sigma_2,\tilde{\Sigma}_2\in\CC^{\brx\times\brx}$. Define the canonical angles between $\im V_2$ and $\im \tilde{V}_2$ by
$\theta_j=\arccos\gamma_j$ for $j=1,\ldots,\brx$,
where $\gamma_1\geq\cdots\geq\gamma_{\brx}$ are the singular values of $\tilde{V}_2^*\cdot V_2$.
The corresponding singular subspaces $\im V_2$ and $\im \tilde{V}_2$ are said to be \textit{near} if the largest canonical angle $\theta_1$ is small. It is known from \cite{wedin} that if there is a $\delta>0$ such that
\begin{equation}\label{seper}
\min_{\substack{i=1,\ldots,n-\brx,\\j=1,\ldots,\brx}}|\tilde{\sigma}_i-\sigma_{n-\brx+j}|\geq\delta\mbox{ and }\min_{i=1,\ldots,n-\brx}\tilde{\sigma}_i\geq\delta,
\end{equation}
then
\begin{equation}\label{uv}
  \sin\theta_j\leq\frac{\|E\|}{\delta},~j=1,\ldots,\brx,
\end{equation}
so that the angles can be used as the error bounds.
The error bounds (\ref{uv}) are also applicable to the singular values of $\tilde{U}_2^*\cdot U_2$. Additionally, as $[V_1,V_2]$ and $[\tilde{V}_1,\tilde{V}_2]$ are both orthogonal, it can be concluded that the same error bounds hold for $\tilde{V}_1^*\cdot V_1$, and likewise for $\tilde{U}_1^*\cdot U_1$.

Let us consider an exact isolated zero $\xi$ and its approximation $x$, and  treat $Df(x)$ as an approximation of $Df(\xi)$. 
Let $\brx$ be the corank of  $Df(\exact)$,  
the singular value decompositions of $Df(\x)$ and $Df(\exact)$ are 
\begin{equation}\label{svd2}
Df(\x)=[U_1,U_2]\cdot \left[
               \begin{array}{cc}
                 \Sigma_1 & 0 \\
                 0 & \Sigma_2 \\
               \end{array}
             \right]\cdot\left[
               \begin{array}{c}
                 V_1^*  \\
                 V_2^* \\
               \end{array}
             \right]\mbox{ and }Df(\exact)=[\tilde{U}_1,\tilde{U}_2]\cdot \left[
               \begin{array}{cc}
                 \tilde{\Sigma}_1 & 0 \\
                 0 & 0 \\
               \end{array}
             \right]\cdot\left[
               \begin{array}{c}
                 \tilde{V}_1^*  \\
                 \tilde{V}_2^* \\
               \end{array}
             \right],
\end{equation}
where $U_1, V_1, \tilde{U}_1, \tilde{V}_1 \in \CC^{n \times (n-\brx)}$, $U_2, V_2, \tilde{U}_2, \tilde{V}_2 \in \CC^{n \times \brx}$, $\Sigma_1$ and $\tilde{\Sigma}_1$ are diagonal matrices of singular values $\sigma_1,\ldots,\sigma_{n-\brx}$ and $\tilde{\sigma}_1,\ldots,\tilde{\sigma}_{n-\brx}$, respectively, and $\Sigma_2$ is a diagonal matrix of singular values $\sigma_{n-\brx+1},\ldots,\sigma_n$. 
The following lemma estimates the errors between singular value decompositions of $Df(\x)$ and $Df(\exact)$ given in (\ref{svd2}). 
\begin{lem}\label{app}
Under Assumption \ref{ass}, let $\epsilon=\|x-\xi\|$ and $0<\epsilon<1$ satisfy that 
$$\|Df(\exact)-Df(x)\|<\frac{c}{4},$$
where $c=\tilde{\sigma}_{n-\brx} $ is  the smallest positive singular values of $Df(\xi)$. 
Then the following results hold:
\begin{enumerate}
  \item\label{lem:app1}  $\sigma_{n-\brx} \geq 3c/4$  and $\|\Sigma_2\|=\bigO(\epsilon)$,
  \item  $\tilde{V}_1^*\cdot V_1$ and $\tilde{V}_2^*\cdot V_2$ are well-conditioned, and so are $\tilde{U}_1^*\cdot U_1$ and $\tilde{U}_2^*\cdot U_2$. That is, condition numbers of these matrices are less than some constant $\mu$ greater than $1$. In addition,
  $\|\tilde{V}_1^*\cdot V_2\|=\bigO(\epsilon)$, $\|\tilde{V}_2^*\cdot V_1\|=\bigO(\epsilon)$, $\|\tilde{U}_1^*\cdot U_2\|=\bigO(\epsilon)$ and $\|\tilde{U}_2^*\cdot U_1\|=\bigO(\epsilon)$,
  \item\label{lem:app3}  Let $v\in \im V_2$ be unitary, then $\opB(x)=U_2^*\cdot D^2\f(\x)\cdot v\cdot V_2$ is well-conditioned.
\end{enumerate}
\end{lem}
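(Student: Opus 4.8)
The plan is to reduce all three parts to three inputs: Weyl's perturbation bound \eqref{sigma}, Wedin's $\sin\theta$ bound \eqref{uv}, and the fact that, since $f$ is a polynomial system, the maps $x\mapsto Df(x)$ and $x\mapsto D^2f(x)$ are Lipschitz on a fixed bounded neighborhood of $\xi$; hence under Assumption \ref{ass} we have $\|E\|:=\|Df(x)-Df(\xi)\|=\bigO(\epsilon)$ and $\|D^2f(x)-D^2f(\xi)\|=\bigO(\epsilon)$, while $\|E\|<c/4$ is exactly the standing hypothesis. For part \ref{lem:app1}, I would apply \eqref{sigma} to $Df(x)=Df(\xi)+E$: since $\tilde\sigma_{n-\kappa}=c$ it gives $\sigma_{n-\kappa}\ge c-\|E\|>3c/4$, and since $\tilde\sigma_{n-\kappa+1}=\dots=\tilde\sigma_n=0$ (as $\kappa$ is the corank of $Df(\xi)$) it gives $\sigma_{n-\kappa+j}\le\|E\|=\bigO(\epsilon)$ for $j=1,\dots,\kappa$, i.e. $\|\Sigma_2\|=\bigO(\epsilon)$.

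For part (2), I would verify the separation hypothesis \eqref{seper} with $A=Df(x)$ and $\tilde A=Df(\xi)$: the relevant ``top'' block of $\tilde A$ consists of $\tilde\sigma_1\ge\dots\ge\tilde\sigma_{n-\kappa}\ge c$ and the ``small'' block of $A$ consists of $\sigma_{n-\kappa+1},\dots,\sigma_n\le\|E\|<c/4$, so with $\delta=3c/4$ we get $|\tilde\sigma_i-\sigma_{n-\kappa+j}|\ge 3c/4$ and $\tilde\sigma_i\ge c\ge\delta$. Then \eqref{uv} gives $\sin\theta_j\le\|E\|/\delta=\bigO(\epsilon)$ for the canonical angles between $\im V_2$ and $\im\tilde V_2$, and, as noted after \eqref{uv}, the same bound holds for the pair $\im V_1,\im\tilde V_1$ and for the $U$-blocks. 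Since the singular values of $\tilde V_2^*V_2$ are the cosines $\cos\theta_j$, each is $\ge\sqrt{1-\bigO(\epsilon^2)}\ge 1/2$ once $\epsilon$ is small, so $\tilde V_2^*V_2$ has all singular values in $[1/2,1]$ and condition number at most $\mu:=2$; the same holds for $\tilde V_1^*V_1$, $\tilde U_1^*U_1$, $\tilde U_2^*U_2$. For the cross products, orthogonality of $[\tilde V_1,\tilde V_2]$ and $[V_1,V_2]$ gives, for any unit $w\in\im V_2$, $\|\tilde V_1^*w\|^2=1-\|\tilde V_2^*w\|^2\le\max_j\sin^2\theta_j$, hence $\|\tilde V_1^*V_2\|=\bigO(\epsilon)$, and symmetrically for $\|\tilde V_2^*V_1\|$, $\|\tilde U_1^*U_2\|$, $\|\tilde U_2^*U_1\|$.

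For part \ref{lem:app3}, write $v=V_2\mathbf{c}$ with $\|\mathbf{c}\|=1$, and put $Q_U=\tilde U_2^*U_2$, $Q_V=\tilde V_2^*V_2$. Expanding $U_2=\tilde U_2Q_U+\tilde U_1(\tilde U_1^*U_2)$ and $V_2=\tilde V_2Q_V+\tilde V_1(\tilde V_1^*V_2)$ and substituting into $\opB(x)$, then using the $\bigO(\epsilon)$ cross-term bounds from (2) and $\|D^2f(x)-D^2f(\xi)\|=\bigO(\epsilon)$ to absorb every term containing a $\tilde U_1^*U_2$, a $\tilde V_1^*V_2$, or a Hessian difference, one obtains
\[
\opB(x)=Q_U^{*}\bigl(\tilde U_2^{*}\cdot D^2f(\xi)\cdot\tilde v\cdot\tilde V_2\bigr)Q_V+\bigO(\epsilon),\qquad \tilde v:=\tilde V_2Q_V\mathbf{c}\in\ker Df(\xi).
\]
Because $\xi$ is a deflation-one singularity, the determinant of $\tilde U_2^{*}D^2f(\xi)\,\tilde v\,\tilde V_2$, viewed as a polynomial in the kernel vector $\tilde v$, is not identically zero (Theorem \ref{opB} together with the analysis preceding Corollary \ref{neccesary}); so for a generic choice of $v$ — which we take to be the intended meaning of the hypothesis, since some $v$ plainly make the reference matrix singular — the middle factor is invertible with a fixed finite condition number $C_0$. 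Since $\sigma_{\min}(Q_U),\sigma_{\min}(Q_V)\ge 1/2$ and $\mathrm{cond}(Q_U),\mathrm{cond}(Q_V)\le\mu$, the product $Q_U^{*}(\cdots)Q_V$ is invertible with condition number $\le\mu^{2}C_0$ and least singular value bounded below by an absolute constant; adding the $\bigO(\epsilon)$ perturbation leaves $\opB(x)$ invertible with condition number at most, say, $2\mu^{2}C_0$ for $\epsilon$ small enough. The hard part will be precisely part \ref{lem:app3}: keeping track of the three simultaneous basis changes $U_2\to\tilde U_2$, $V_2\to\tilde V_2$, $v\to\tilde v$ and of the Hessian perturbation, and identifying the fixed reference matrix $\tilde U_2^{*}D^2f(\xi)\tilde v\tilde V_2$ whose invertibility must be imported from the deflation-one hypothesis via Theorem \ref{opB} — which is also where the (implicit) genericity of $v$ is genuinely used. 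Parts \ref{lem:app1} and (2) are routine bookkeeping on top of the quoted Weyl and Wedin estimates.
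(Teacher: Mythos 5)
Your proposal is correct and follows essentially the same route as the paper: Weyl's bound for part (1), Wedin's $\sin\theta$ theorem for part (2), and the reduction $\opB(x)=(U_2^*\cdot\tilde U_2)\cdot\opB(\xi)\cdot(\tilde V_2^*\cdot V_2)+\bigO(\epsilon)$ for part (3). The only minor differences are that you obtain the cross-term bounds $\|\tilde V_1^*\cdot V_2\|=\bigO(\epsilon)$ etc.\ directly from $\sin\theta_j\le\|E\|/\delta$ with a constant $\delta$, whereas the paper reads them off an explicit block computation of $U^*\cdot E\cdot\tilde V$, and that you make explicit the genericity of $v$ that the paper leaves implicit when invoking Theorem~\ref{opB}.
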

\begin{proof}
Let $E=Df(\exact)-Df(x)$, then $\|E\|=\bigO(\epsilon)<c/4$ since $Df:\CC^n\rightarrow\CC^{n\times n}$ is Lipschitz continuous.
\begin{enumerate}
  \item According to the error bound (\ref{sigma}), we have
$\sigma_{n-\brx} \geq\tilde{\sigma}_{n-\brx}-\|E\| > 3c/4$ and 
$\sigma_{n-\kappa+j} \leq \tilde{\sigma}_{n-\kappa+j}+\|E\|= \bigO(\epsilon)<c/4 $ for $j=1,\ldots,\brx$.
Hence we have  $\|\Sigma_2\|=\bigO(\epsilon)$.

  \item Set $\delta=cp$ for some $1/4<p<3/4$. Then, for all $i=1,\dots, n-\kappa$ and $j=1,\dots, \brx$, we have $\tilde{\sigma}_i-\sigma_{n-\kappa+j}\geq\tilde{\sigma}_{n-\brx}-\|E\| > 3c/4>\delta$ and $\tilde{\sigma}_i\geq c>\delta$  so that the conditions in (\ref{seper}) are satisfied. Thus, according to the error bounds (\ref{uv}), we know that $\sin \theta_j\leq\|E\|/\delta<\frac{1}{4p}$ for $j=1,\ldots,\brx$ so that singular values $\gamma_j$ for $\tilde{V}_2^*\cdot V_2$ are greater than $\cos\theta_j>\frac{\sqrt{16p^2-1}}{4p}>0$ when $1/4<p<3/4$. It concludes that $\tilde{V}_2^*\cdot V_2$ is well-conditioned since its condition number is upper bounded by $\frac{4p}{\sqrt{16p^2-1}}$ which is greater than $1$ if $1/4<p<3/4$. The first part is proved by setting $\mu=\frac{4p}{\sqrt{16p^2-1}}$ for some $p$ in between $1/4$ and $3/4$. It is preferred to select a value near $3/4$ to achieve a $\mu$ value close to $1$.
  Likewise, $\tilde{V}_1^*\cdot V_1$, $\tilde{U}_1^*\cdot U_1$ and $\tilde{U}_2^*\cdot U_2$ are well-conditioned as well. 
  
  For the second part, since $\|U^*\cdot E\cdot \tilde{V}\|=\|E\|=\bigO(\epsilon)$ and 
  \begin{align*}
       U^*\cdot E\cdot \tilde{V}     =& U^*\cdot \left(Df(\exact)-Df(x)\right)\cdot\tilde{V}\\
       = &\begin{bmatrix}
                 U_1^*  \\
                 U_2^* \end{bmatrix}
                 \cdot\left(\tilde{U}\cdot\tilde{\Sigma}\cdot\tilde{V}^*-U\cdot \Sigma\cdot V^*\right)\cdot[\tilde{V}_1,\tilde{V}_2]\\
             = &
               \begin{bmatrix}
                 U_1^*  \\
                 U_2^* \\
               \end{bmatrix}\cdot\left(\tilde{U}_1\cdot\tilde{\Sigma}_1\cdot\tilde{V}_1^*-U_1\cdot \Sigma_1\cdot V_1^*-U_2\cdot \Sigma_2\cdot V_2^*\right)\cdot[\tilde{V}_1,\tilde{V}_2]\\
             = &\begin{bmatrix}
U_1^*\cdot\tilde{U}_1\cdot\tilde{\Sigma}_1-\Sigma_1\cdot V_1^*\cdot \tilde{V}_1 & -\Sigma_1\cdot V_1^*\cdot \tilde{V}_2 \\
                 U_2^*\cdot \tilde{U}_1\cdot \tilde{\Sigma}_1 & -\Sigma_2\cdot V_2^*\cdot\tilde{V}_2
             \end{bmatrix},               \end{align*}
  we have $\|\tilde{V}_2^*\cdot V_1\|=\bigO(\epsilon)$ and $\|\tilde{U}_1^*\cdot U_2\|=\bigO(\epsilon)$. Likewise, $\|\tilde{V}_1^*\cdot V_2\|=\bigO(\epsilon)$ and $\|\tilde{U}_2^*\cdot U_1\|=\bigO(\epsilon)$ hold.
  \item  Let $v=V_2\cdot \boldsymbol{\lambda}_2\in\im V_2$ be unitary. 
Then, we have
\begin{align*}
  \opB(x) & =U_2^*\cdot D^2\f(x)\cdot V_2\cdot \boldsymbol{\lambda}_2\cdot V_2 \\
        & =U_2^*\cdot D^2\f(\exact)\cdot V_2\cdot \boldsymbol{\lambda}_2\cdot V_2 + U_2^*\cdot \left(D^2\f(x)-D^2\f(\exact)\right)\cdot V_2\cdot \boldsymbol{\lambda}_2\cdot V_2 \\
        & =U_2^*\cdot[\tilde{U}_1,\tilde{U}_2]\cdot\left[
               \begin{array}{c}
                 \tilde{U}_1^*  \\
                 \tilde{U}_2^* \\
               \end{array}
             \right]\cdot D^2\f(\exact)\cdot V_2\cdot \boldsymbol{\lambda}_2\cdot V_2 + \bigO(\epsilon)\\
   & =U_2^*\cdot\tilde{U}_2\cdot\tilde{U}_2^*\cdot D^2\f(\exact)\cdot V_2\cdot \boldsymbol{\lambda}_2\cdot V_2\\
   &\quad\quad\quad+U_2^*\cdot\tilde{U}_1\cdot\tilde{U}_1^*\cdot D^2\f(\exact)\cdot V_2\cdot \boldsymbol{\lambda}_2\cdot V_2 + \bigO(\epsilon)\\
   & =U_2^*\cdot\tilde{U}_2\cdot\tilde{U}_2^*\cdot D^2\f(\exact)\cdot V_2\cdot \boldsymbol{\lambda}_2\cdot V_2 + \bigO(\epsilon)\\
   & = U_2^*\cdot\tilde{U}_2\cdot\tilde{U}_2^*\cdot D^2\f(\exact)\cdot (\tilde{V}_2\cdot\tilde{V}_2^*)\cdot V_2\cdot \boldsymbol{\lambda}_2\cdot (\tilde{V}_2\cdot\tilde{V}_2^*)\cdot V_2 + \bigO(\epsilon)\\
   & = (U_2^*\cdot\tilde{U}_2)\cdot\opB(\xi)\cdot(\tilde{V}_2^*\cdot V_2) + \bigO(\epsilon)
 \end{align*}
Observe that the third equality can be derived by the fact that $\|D^2f(x)-D^2f(\xi)\|=\bigO(\epsilon)$, the fifth equality is obtained by $\|U_2^*\cdot \tilde{U}_1\|=\bigO(\epsilon)$, and the last equality follows since $(\tilde{V}_2\cdot\tilde{V}_2^*)\cdot V_2\cdot \boldsymbol{\lambda}_2$ is a unitary vector in $\im \tilde{V}_2$. 
Note that $\opB(\xi)$ is invertible by Theorem \ref{opB}. Because $U_2^*\cdot\tilde{U}_2$ and $\tilde{V}_2^*\cdot V_2$ are well-conditioned, by the error bounds (\ref{sigma}), it concludes that $\opB(x)$ is well-conditioned as well. \qedhere
\end{enumerate}
\end{proof}

Combining all these, we present the two-step Newton's method algorithm like the following:

    \begin{algorithm}[H]
	\caption{Two-step Newton's method for refining deflation-one singular zeros}
	\label{revisedNewton}
	\begin{algorithmic}[1]\label{alg1}
		
		\REQUIRE~~\\
		$\f$: a square polynomial system from $\CC[X_1,\ldots,X_n]$; \\
		$\x$:  an approximation of a deflation-one singular zero $\exact$; \\
		$\tau$: a tolerance for determining the numerical rank of $\Jac$; \\
		\ENSURE~~\\
		$x''$:  the refined approximation of $\xi$;\\
		
		\STATE calculate $\Jac=U\cdot\Sigma\cdot V^{\ast}$ via SVD, \\
		where $U=[u_1,\ldots,u_n]$, $V=[v_1,\ldots,v_n]$ are unitary and $\Sigma=\diag(\sigma_1,\ldots,\sigma_n)$ with $\sigma_1\geq\cdots\geq\sigma_n\geq0$.
		\STATE determine $n-\brx=\arg\max\{i\mid\sigma_i>\tau\}$ and denote $U=[U_1,U_2]$, $V=[V_1,V_2]$, $\Sigma_1=\diag(\sigma_1,\dots, \sigma_{n-\kappa})$, $\Sigma_2=\diag(\sigma_{n-\brx+1},\ldots,\sigma_n)$
		where $U_2=[u_{n-\brx+1},\ldots,u_{n}]$, $V_2=[v_{n-\brx+1},\ldots,v_{n}]$.\\
		\STATE refine $\x$ to
			$\x'=\x-V_1\cdot\Sigma_1^{-1}\cdot U_1^*\cdot\f(\x)$.
		\STATE construct the invertible $\brx\times\brx$ matrix $\opB'=U_2^*\cdot D^2\f(\x')\cdot v\cdot V_2$
		where $v\in\im V_2$ is a random sample with $\|v\|=1$.
		\STATE solve the linear equations $\opB'\cdot\boldsymbol{\delta}
			=-U_2^*\cdot D\f(\x')\cdot v$.
		\STATE refine $\x'$ to $\x''=\x' + V_2 \cdot \boldsymbol{\delta}$.
	\end{algorithmic}
\end{algorithm}

\subsection{Quadratic convergence}

We prove the quadratic convergence of Algorithm \ref{revisedNewton} under Assumption \ref{ass}.

\begin{thm}\label{mainresult}
Suppose that a square polynomial system $f\in \mathbb{C}[X_1,\dots, X_n]^n$ with its deflation-one singular zero $\xi$, and an approximation $x$ are given under Assumption \ref{ass}. 
Then, applying the two-step Newton's method proposed in Algorithm \ref{revisedNewton} iteratively, an approximation $x$ converges quadratically to the exact zero $\xi$.
\end{thm}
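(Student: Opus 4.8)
The plan is to track the error $\epsilon=\|x-\xi\|$ through the two refinement steps. Step~(1) is a Newton step along the well-conditioned part of $D\f(x)$, so it should drive the component of the error orthogonal to $\ker D\f(\xi)$ down to $\bigO(\epsilon^2)$ while leaving a kernel component of size $\bigO(\epsilon)$; step~(2) should then remove the kernel component up to $\bigO(\epsilon^2)$, yielding $\|x''-\xi\|=\bigO(\epsilon^2)$. The ingredients I would use repeatedly are the mean-value bound $\f(x)=D\f(x)\cdot(x-\xi)+\bigO(\epsilon^2)$, the Taylor bound $D\f(x')=D\f(\xi)+D^2\f(\xi)(\cdot\,,x'-\xi)+\bigO(\|x'-\xi\|^2)$ (both valid because $\f,D\f,D^2\f$ are Lipschitz on a fixed neighborhood of $\xi$), together with all the perturbation estimates of Lemma~\ref{app}, whose $\bigO(\cdot)$ constants are uniform on that neighborhood.

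For step~(1): since $U_1^*\cdot D\f(x)=\Sigma_1\cdot V_1^*$, one has $V_1\cdot\Sigma_1^{-1}\cdot U_1^*\cdot D\f(x)=V_1\cdot V_1^*$, while $\|V_1\cdot\Sigma_1^{-1}\cdot U_1^*\|=\sigma_{n-\brx}^{-1}=\bigO(1)$ by Lemma~\ref{app}\,(\ref{lem:app1}). Substituting the mean-value bound into $x'=x-V_1\cdot\Sigma_1^{-1}\cdot U_1^*\cdot\f(x)$ gives $x'-\xi=(I-V_1 V_1^*)(x-\xi)+\bigO(\epsilon^2)=V_2 V_2^*(x-\xi)+\bigO(\epsilon^2)$. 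Hence, setting $\boldsymbol\eta:=V_2^*\cdot(x-\xi)$, we obtain $\|\boldsymbol\eta\|\le\epsilon$, $\|x'-\xi\|=\bigO(\epsilon)$, $\|x'-x\|=\bigO(\epsilon)$, and $x'-\xi=V_2\cdot\boldsymbol\eta+\bigO(\epsilon^2)$; in particular the part of the error of $x'$ orthogonal to $\ker D\f(\xi)$ is already $\bigO(\epsilon^2)$.

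For step~(2): since $\opB'$ differs from $\opB(x)$ only through $D^2\f(x')$ versus $D^2\f(x)$ and $\|x'-x\|=\bigO(\epsilon)$, we have $\opB'=\opB(x)+\bigO(\epsilon)$, which is invertible and well-conditioned by Lemma~\ref{app}\,(\ref{lem:app3}); in particular $\|\opB'^{-1}\|=\bigO(1)$. Now I would expand both sides of $\opB'\cdot\boldsymbol\delta=-U_2^*\cdot D\f(x')\cdot v$. Using $x'-\xi=V_2\boldsymbol\eta+\bigO(\epsilon^2)$, the Lipschitz bound on $D^2\f$, and the symmetry of the Hessian, one gets $\opB'\cdot\boldsymbol\eta=U_2^*\cdot D^2\f(\xi)(v,x'-\xi)+\bigO(\epsilon^2)$ and, from the Taylor bound, $U_2^*\cdot D\f(x')\cdot v=U_2^*\cdot D\f(\xi)\cdot v+U_2^*\cdot D^2\f(\xi)(v,x'-\xi)+\bigO(\epsilon^2)$. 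The crucial point is that the leftover term $U_2^*\cdot D\f(\xi)\cdot v$ is actually $\bigO(\epsilon^2)$, not merely $\bigO(\epsilon)$: writing $D\f(\xi)=\tilde U_1\tilde\Sigma_1\tilde V_1^*$ and $v=V_2\boldsymbol\lambda_2$, it equals $(U_2^*\tilde U_1)\,\tilde\Sigma_1\,(\tilde V_1^*V_2)\,\boldsymbol\lambda_2$, and both $\|U_2^*\tilde U_1\|$ and $\|\tilde V_1^*V_2\|$ are $\bigO(\epsilon)$ by Lemma~\ref{app}. Subtracting the two expansions, $\opB'\cdot(\boldsymbol\delta+\boldsymbol\eta)=\bigO(\epsilon^2)$, so $\boldsymbol\delta+\boldsymbol\eta=\bigO(\epsilon^2)$, whence $x''-\xi=(x'-\xi)+V_2\cdot\boldsymbol\delta=V_2\cdot(\boldsymbol\eta+\boldsymbol\delta)+\bigO(\epsilon^2)=\bigO(\epsilon^2)$.

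Since every constant above depends only on the Lipschitz data of $\f$ near $\xi$ and on the uniform bounds of Lemma~\ref{app}, there are $K>0$ and $\epsilon_0>0$ — chosen small enough that Lemma~\ref{app} applies, that the breadth $\brx$ is correctly detected by the tolerance $\tau$, and that $K\epsilon_0<1$ — with $\|x''-\xi\|\le K\|x-\xi\|^2$ whenever $\|x-\xi\|<\epsilon_0$ (which also keeps $x''$ inside the neighborhood). Iterating then yields $K\|x_{k+1}-\xi\|\le(K\|x_k-\xi\|)^2$, i.e.\ quadratic convergence to $\xi$. I expect the main obstacle to be the bookkeeping in step~(2): establishing the cancellation $U_2^*\cdot D\f(\xi)\cdot v=\bigO(\epsilon^2)$ — which is precisely what upgrades the final error from $\bigO(\epsilon)$ to $\bigO(\epsilon^2)$, and which relies on $v$ lying in the \emph{approximate} kernel $\im V_2$ — while keeping straight from which singular value decomposition ($D\f(x)$, $D\f(\xi)$, or $D\f(x')$) each factor is drawn.
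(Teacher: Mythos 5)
Your proof is correct and follows essentially the same route as the paper's: the same decomposition into one error estimate per refinement step, the same reliance on the perturbation bounds of Lemma~\ref{app}, and the same crucial cancellation $U_2^*\cdot Df(\xi)\cdot v=(U_2^*\cdot\tilde U_1)\cdot\tilde\Sigma_1\cdot(\tilde V_1^*\cdot V_2)\cdot\boldsymbol{\lambda}_2=\bigO(\epsilon^2)$ that upgrades the second step to quadratic order. The only differences are in bookkeeping: for the first step you obtain $x'-\xi=V_2\cdot V_2^*\cdot(x-\xi)+\bigO(\epsilon^2)$ directly from $V_1\cdot\Sigma_1^{-1}\cdot U_1^*\cdot Df(x)=V_1\cdot V_1^*$, where the paper (Lemma~\ref{lem:1strefinement}) detours through $\|f(x')\|=\bigO(\epsilon^2)$, and for the second step you expand $Df(x')\cdot v$ at $\xi$ and compare with $\opB'\cdot\boldsymbol{\eta}$, where the paper (Lemma~\ref{lemma14}) expands $Df(x'')\cdot v$ at $x'$ and $Df(\xi)\cdot v$ at $x''$ — an equivalent but slightly less direct calculation.
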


Theorem \ref{mainresult} is deduced  immediately from 
 Lemma \ref{lem:1strefinement} and Lemma \ref{lemma14},  which measure the error at each refinement step of Algorithm \ref{revisedNewton}.

\begin{lem}\label{lem:1strefinement}
Let $y=-V_1\cdot\Sigma_1^{-1}\cdot U_1^*\cdot \f(\x)$. Then, the projection $x'=x+y$ satisfies $\left\|\x'-\exact-V_2\cdot\boldsymbol{\alpha}\right\|=\bigO(\epsilon^2)$
where $\boldsymbol{\alpha}=V_2^*(\x'-\exact)$.
\end{lem}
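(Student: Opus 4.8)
The plan is to reduce the statement to the single bound $\|V_1^*(\x'-\exact)\| = \bigO(\epsilon^2)$, which says that after the first refinement the error $\x'-\exact$ is second-order-close to $\im V_2$ (the approximate kernel of $D\f(\exact)$). This reduction is immediate: since $[V_1,V_2]$ is unitary, $V_1V_1^*+V_2V_2^* = I$, and with $\boldsymbol{\alpha} = V_2^*(\x'-\exact)$ one has
\[
\x'-\exact-V_2\cdot\boldsymbol{\alpha} \;=\; (I-V_2V_2^*)(\x'-\exact) \;=\; V_1V_1^*(\x'-\exact),
\]
whose norm equals $\|V_1^*(\x'-\exact)\|$ because $V_1$ has orthonormal columns. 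So everything comes down to estimating $\|V_1^*(\x'-\exact)\|$.

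To get that estimate, I would first Taylor-expand $\f$ about $\x$ and evaluate at $\exact$: since $\f(\exact)=0$ and $\f$ is polynomial (so $D^2\f$ is bounded on $\mathrm{Ball}(\exact,1)$, which contains $\x$ because $\epsilon<1$), Taylor's theorem gives $0 = \f(\x)+D\f(\x)(\exact-\x)+R$ with $\|R\|=\bigO(\epsilon^2)$, i.e.\ $\f(\x)=D\f(\x)(\x-\exact)+\bigO(\epsilon^2)$. Substituting the singular value decomposition $D\f(\x)=U_1\Sigma_1V_1^*+U_2\Sigma_2V_2^*$ from (\ref{svd2}) and left-multiplying by $U_1^*$, the orthogonality relations $U_1^*U_1=I$ and $U_1^*U_2=0$ eliminate the $\Sigma_2$-block and yield $U_1^*\f(\x)=\Sigma_1V_1^*(\x-\exact)+\bigO(\epsilon^2)$. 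Since $\|\Sigma_1^{-1}\|\le 4/(3c)$ is a uniform constant by Lemma~\ref{app}(\ref{lem:app1}), multiplying by $-\Sigma_1^{-1}$ and using $V_1^*V_1=I$ gives
\[
V_1^*(\x'-\exact) = V_1^*(\x-\exact)+V_1^* y = V_1^*(\x-\exact)-\Sigma_1^{-1}U_1^*\f(\x) = \bigO(\epsilon^2),
\]
because the two copies of $V_1^*(\x-\exact)$ cancel. Together with the first paragraph this gives $\|\x'-\exact-V_2\cdot\boldsymbol{\alpha}\|=\bigO(\epsilon^2)$.

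I do not expect a real obstacle here; the content is a two-line computation, and the only care needed is that all implied constants are uniform in $\epsilon$. This relies on exactly two facts supplied earlier: the Taylor remainder is genuinely $\bigO(\epsilon^2)$ because $D^2\f$ is bounded on the fixed ball $\mathrm{Ball}(\exact,1)$, and $\|\Sigma_1^{-1}\|=\bigO(1)$, which is the content of Lemma~\ref{app}(\ref{lem:app1}). The one point worth emphasizing in the write-up is that the expansion must be centered at $\x$ rather than at $\exact$, so that the matrix $D\f(\x)$ — whose SVD supplies $U_1,\Sigma_1,V_1$ used in the refinement — appears on the nose and cancels cleanly against $\Sigma_1^{-1}U_1^*$.
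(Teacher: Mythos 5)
Your proof is correct, and it is tighter and more direct than the one in the paper. Both arguments begin with the same reduction to $\|V_1^*(\x'-\exact)\|=\bigO(\epsilon^2)$, but from there they diverge. The paper passes from $\|V_1^*(\x'-\exact)\|$ to $\|\tilde{\Sigma}_1^{-1}\tilde{U}_1^*D\f(\exact)(\x'-\exact)\|$ (silently identifying $V_1$ with $\tilde V_1$ from the SVD of $D\f(\exact)$, which costs only an extra $\bigO(\epsilon^2)$ by Lemma~\ref{app} but is left implicit), then reduces to showing $\|\f(\x')\|=\bigO(\epsilon^2)$, which it does by a second Taylor expansion at $\x$, the identity $\|U^*\f(\x)+U^*D\f(\x)y\|=\|U_2^*\f(\x)\|$, and a third Taylor expansion combined with $\|\Sigma_2\|=\bigO(\epsilon)$. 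You instead stay entirely within the SVD of $D\f(\x)$: one Taylor expansion at $\x$ gives $U_1^*\f(\x)=\Sigma_1V_1^*(\x-\exact)+\bigO(\epsilon^2)$ (the $\Sigma_2$ block being killed exactly by $U_1^*U_2=0$), and the Newton step cancels $V_1^*(\x-\exact)$ on the nose, with $\|\Sigma_1^{-1}\|\le 4/(3c)$ controlling the constant. Your route avoids both the comparison between the two SVDs and the use of $\|\Sigma_2\|=\bigO(\epsilon)$; the paper's route has the side benefit of establishing $\|\f(\x')\|=\bigO(\epsilon^2)$, a residual bound that is of independent interest, but it is not needed for the lemma as stated. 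No gaps.
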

\begin{proof}
	Note that
	\[\left\|\x'-\exact-V_2\cdot\boldsymbol{\alpha}\right\|=\left\|V^*\left(\x'-\exact-V_2\cdot\boldsymbol{\alpha}\right)\right\|=\|V_1^*(\x'-\exact)\|=\|\tilde{\Sigma}_1^{-1}\cdot\tilde{U}_1^*\cdot D\f(\exact) (\x'-\exact)\|.\]
	Hence, we claim that $\| Df(\xi)(x'-\xi)\|=\bigO(\epsilon^2)$. Considering Taylor expansion of $f(x')$ at $\xi$, we have
	\[
	\f(\x') = \f(\exact)+D\f(\exact) (\x'-\exact) + \sum_{k\geq2}\frac{D^k\f(\exact)}{k!}(\x'-\exact)^k.
	\]
	Since $f(\xi)=0$, we have $\|Df(\xi)(x'-\xi)\|\leq \| f(x')\|+\bigO(\epsilon^2)$. Thus, it is enough to show that $\|f(x')\|=\bigO(\epsilon^2)$. By Taylor expansion of $f(x')$ at $x$, we have
	\[\f(\x')=\f(\x)+\Jac y+
	\sum_{k\geq2}\frac{D^k\f(\x)}{k!}y^k.\]
Since $\|y\|=\bigO(\epsilon)$, we know that the last term is $\bigO(\epsilon^2)$.
Additionally, because $U$ is a unitary matrix, we get that $\|f(x')\|\leq \|f(x)+Df(x)y\|+\bigO(\epsilon^2)=\|U^*\cdot f(x)+U^*\cdot Df(x)y\|+\bigO(\epsilon^2)$. Furthermore,
\begin{align*}
\|U^*\cdot f(x)+U^*\cdot Df(x)y\| & =\|U_1^*\cdot f(x)+U_1^*\cdot Df(x)y\|+\|U_2^*\cdot f(x)+U_2^*\cdot Df(x)y\| \\
&=\|U_1^*\cdot f(x)-\Sigma_1\cdot V_1^*\cdot V_1\cdot\Sigma_1^{-1}\cdot U_1^*\cdot f(x)\|\\
&\qquad\qquad\qquad+\|U_2^*\cdot f(x)-\Sigma_2\cdot V_2^*\cdot V_1\cdot\Sigma_1^{-1}\cdot U_1^*\cdot f(x)\| \\
&=\|U_1^*\cdot f(x)-U_1^*\cdot f(x)\|+\|U_2^*\cdot f(x)-0\|\\
&=\|U_2^*\cdot f(x)\|.
\end{align*}
Finally, by Taylor's expansion of $f(\xi)$ at $x$, we get
\[0=\f(\exact) = \f(\x)+\Jac (\exact-\x) + \sum_{k\geq2}\frac{D^k\f(\x)}{k!}(\exact-\x)^k,\]
and hence $\|U_2^*\cdot f(x)\|\leq \|U_2^*\cdot Df(x)(\xi-x)\|+\bigO(\epsilon^2)\leq \|\Sigma_2\cdot V_2^*(\xi-x)\|+\bigO(\epsilon^2)=\bigO(\epsilon^2)$.
The last inequality is obtained by Lemma \ref{app} because $\|\Sigma_2\|=\bigO(\epsilon)$. Thus, $\|f(x')\|=\bigO(\epsilon^2)$, and the result follows.
 \end{proof}

Note that $x'$ may not necessarily converge closer to $\exact$ than $x$. However, it has been adjusted to a special location such
that $x'-V_2\cdot\boldsymbol{\alpha}$ allows the quadratic convergence to $\xi$ 
if the step-length vector $\boldsymbol{\alpha}$ is properly estimated. The following lemma shows  that the solution $\boldsymbol{\delta}$ of the linear equation $\opB'\cdot\boldsymbol{\delta}
=-U_2^*\cdot D\f(\x')\cdot v$ satisfies that $\|-\boldsymbol{\alpha}-\boldsymbol{\delta}\|= \bigO(\epsilon^2)$.

\begin{lem}\label{lemma14}
Suppose that $\boldsymbol{\delta}
        =-\opB'^{-1}\cdot U_2^*\cdot D\f(\x')\cdot v$
where $\opB'=U_2^*\cdot D^2\f(\x')\cdot v\cdot V_2$ and $v\in \im V_2$ is unitary. Then, the refinement $x''=x'+V_2\cdot \boldsymbol{\delta}$ satisfies that $\|\xi-x''\|=\bigO(\epsilon^2)$.
\end{lem}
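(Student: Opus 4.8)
The plan is to reduce the statement to the estimate $\|\boldsymbol{\alpha}+\boldsymbol{\delta}\|=\bigO(\epsilon^2)$ announced just before the lemma, and then read off the conclusion. Indeed, Lemma~\ref{lem:1strefinement} gives $x'-\xi=V_2\cdot\boldsymbol{\alpha}+\bigO(\epsilon^2)$, so
\[
 x''-\xi \;=\; (x'-\xi)+V_2\cdot\boldsymbol{\delta} \;=\; V_2\cdot(\boldsymbol{\alpha}+\boldsymbol{\delta})+\bigO(\epsilon^2),
\]
and since $V_2$ has orthonormal columns, $\|\xi-x''\|\le\|\boldsymbol{\alpha}+\boldsymbol{\delta}\|+\bigO(\epsilon^2)$. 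Now $\boldsymbol{\delta}$ solves $\opB'\cdot\boldsymbol{\delta}=-U_2^*\cdot Df(x')\cdot v$, and $\opB'$ is well-conditioned: it differs from the matrix $\opB(x)$ of Lemma~\ref{app} only in that $D^2f(x)$ is replaced by $D^2f(x')$, and $\|D^2f(x')-D^2f(x)\|=\bigO(\|x'-x\|)=\bigO(\epsilon)$, so a standard perturbation bound keeps $\|(\opB')^{-1}\|=\bigO(1)$. Hence it suffices to prove the single vector identity
\[
 \opB'\cdot\boldsymbol{\alpha}\;=\;U_2^*\cdot Df(x')\cdot v+\bigO(\epsilon^2),
\]
for then $\opB'\cdot(\boldsymbol{\alpha}+\boldsymbol{\delta})=\bigO(\epsilon^2)$ and $\|\boldsymbol{\alpha}+\boldsymbol{\delta}\|\le\|(\opB')^{-1}\|\cdot\bigO(\epsilon^2)=\bigO(\epsilon^2)$.

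To establish that identity I would Taylor-expand $Df\cdot v$ about $\xi$ along the segment $[\xi,x']$, whose length is $\bigO(\epsilon)$ (from $\|x'-x\|=\|y\|=\bigO(\epsilon)$ in the proof of Lemma~\ref{lem:1strefinement} together with $\|x-\xi\|=\epsilon$). Using the integral form of the remainder and Lipschitz continuity of $D^2f$, this gives
\[
 Df(x')\cdot v \;=\; Df(\xi)\cdot v + D^2f(x')\cdot(x'-\xi)\cdot v + \bigO(\epsilon^2).
\]
The crucial cancellation is that $U_2^*\cdot Df(\xi)\cdot v=\bigO(\epsilon^2)$: since the singular value decomposition of $Df(\xi)$ in (\ref{svd2}) has vanishing trailing block, $Df(\xi)=\tilde U_1\cdot\tilde\Sigma_1\cdot\tilde V_1^*$, and writing $v=V_2\cdot\boldsymbol{\lambda}_2$ this term equals $(U_2^*\tilde U_1)\cdot\tilde\Sigma_1\cdot(\tilde V_1^*V_2)\cdot\boldsymbol{\lambda}_2$, a product of two $\bigO(\epsilon)$ factors supplied by Lemma~\ref{app} sandwiching a $\bigO(1)$ factor --- this is the ``double smallness'' coming from $v$ being only approximately in $\ker Df(\xi)$ and $U_2$ only approximately orthogonal to $\im Df(\xi)$. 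Then I substitute $x'-\xi=V_2\cdot\boldsymbol{\alpha}+\bigO(\epsilon^2)$ (the error is absorbed since $\|D^2f(x')\|$ and $\|v\|$ are $\bigO(1)$), left-multiply by $U_2^*$, and use symmetry of the Hessian in its two derivative slots to rewrite $U_2^*\cdot D^2f(x')\cdot(V_2\cdot\boldsymbol{\alpha})\cdot v=\big(U_2^*\cdot D^2f(x')\cdot v\cdot V_2\big)\cdot\boldsymbol{\alpha}=\opB'\cdot\boldsymbol{\alpha}$, which is precisely the claimed identity.

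I expect the only real difficulty to be careful bookkeeping of the error terms rather than any conceptual step: one must derive $U_2^*\cdot Df(\xi)\cdot v=\bigO(\epsilon^2)$ from the perturbation bounds of Lemma~\ref{app} (it is \emph{not} zero, because $v\in\im V_2$ is taken from the singular value decomposition of $Df(x)$, not of $Df(\xi)$), one must keep track of which tensor slot each vector is contracted against so that the Hessian-symmetry step producing $\opB'$ is legitimate, and one must note the short perturbation argument that transfers well-conditioning from $\opB(x)$ to $\opB'$. Everything else is routine Taylor expansion combined with the norm estimates already assembled in Lemmas~\ref{app} and~\ref{lem:1strefinement}.
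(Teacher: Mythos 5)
Your proposal is correct and uses the same three key ingredients as the paper's proof --- the reduction to $\|\boldsymbol{\alpha}+\boldsymbol{\delta}\|=\bigO(\epsilon^2)$ via Lemma~\ref{lem:1strefinement}, the ``double smallness'' $U_2^*\cdot Df(\xi)\cdot v=(U_2^*\tilde U_1)\cdot\tilde\Sigma_1\cdot(\tilde V_1^*V_2)\cdot\boldsymbol{\lambda}_2=\bigO(\epsilon^2)$ from Lemma~\ref{app}, and the well-conditioning of the $\brx\times\brx$ Hessian block --- but it arranges them along a genuinely shorter route. The paper first shows $\|\boldsymbol{\delta}\|=\bigO(\epsilon)$, then performs two Taylor expansions, of $Df(x'')\cdot v$ at $x'$ (to get $\|U_2^*\cdot Df(x'')\cdot v\|=\bigO(\epsilon^2)$ from the defining equation for $\boldsymbol{\delta}$) and of $Df(\xi)\cdot v$ at $x''$, and must additionally argue that the matrix $U_2^*\cdot D^2f(x'')\cdot v\cdot V_2$, evaluated at the not-yet-controlled point $x''$, is invertible by repeating the argument of Lemma~\ref{app}~(\ref{lem:app3}). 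You instead expand $Df(x')\cdot v$ once, at $\xi$, obtain the single identity $\opB'\cdot\boldsymbol{\alpha}=U_2^*\cdot Df(x')\cdot v+\bigO(\epsilon^2)$, and invert only $\opB'$ itself, whose conditioning follows directly by an $\bigO(\epsilon)$ perturbation of $\opB(x)$ from Lemma~\ref{app}; this removes the preliminary bound on $\|\boldsymbol{\delta}\|$, one Taylor expansion, and the auxiliary invertibility claim at $x''$. The bookkeeping points you flag (the Hessian slot-symmetry needed to identify $U_2^*\cdot D^2f(x')\cdot(V_2\boldsymbol{\alpha})\cdot v$ with $\opB'\cdot\boldsymbol{\alpha}$, and the fact that $U_2$, $V_2$, $v$ come from the SVD of $Df(x)$ rather than $Df(\xi)$ so that $U_2^*\cdot Df(\xi)\cdot v$ is only $\bigO(\epsilon^2)$, not zero) are exactly the right ones, so I see no gap.
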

\begin{proof}
According to Lemma \ref{app}, we may consider that $\opB'$ is invertible. Therefore,
\begin{align*}
  \left\|\boldsymbol{\delta}\right\| & =\left\|\opB'^{-1}\cdot U_2^*\cdot Df(x')\cdot v\right\| \\
        & =\left\|\opB'^{-1}\cdot U_2^*\cdot(\Jac+\bigO(\|x'-x\|))\cdot v\right\| \\
        & \leq\left\|\opB'^{-1}\cdot U_2^*\cdot\Jac\cdot v\right\|+\bigO(\epsilon)\\
   & =\bigO(\epsilon). 
\end{align*}
The inequality is obtained by the fact that $\|x'-x\|=\|y\|=\bigO(\epsilon)$.
Considering Taylor's expansion of $Df(x'')\cdot v$ at $x'$, we have
	\[Df(x'')\cdot v=Df(x')\cdot v+D^2f(x')\cdot v(x''-x')+\sum_{k\geq2}\frac{D^{k+1}f(x')\cdot v}{k!}(x''-x')^k.\]
We claim that $\|Df(x'')\cdot v\|=\bigO(\epsilon^2)$.
From the fact that $$U_2^*\cdot D^2f(x')\cdot v(x''-x')=(U_2^*\cdot D^2f(x')\cdot v\cdot V_2)\cdot\boldsymbol{\delta}=\opB'\cdot \boldsymbol{\delta}$$ and $\opB'\cdot\boldsymbol{\delta}
=-U_2^*\cdot D\f(\x')\cdot v$,
it can be derived that 
	\[Df(x'')\cdot v=\sum_{k\geq2}\frac{D^{k+1}f(x')\cdot v}{k!}(x''-x')^k.\]
Therefore, we have $\|U_2^*\cdot Df(x'')\cdot v\|=\|Df(x'')\cdot v\|=\bigO(\epsilon^2)$.
By Taylor expansion of $Df(\xi)\cdot v$ at $x''$, we know that 
	\begin{equation}\label{eq:taylorOfxi}
		Df(\xi)\cdot v=Df(x'')\cdot v+D^2f(x'')\cdot v(\xi-x'')+\sum_{k\geq2}\frac{D^{k+1}f(x'')\cdot v}{k!}(\xi-x'')^k.
	\end{equation}
We consider the second term from the expansion. By Lemma \ref{lem:1strefinement}, we have that $$U_2^*\cdot D^2f(x'')\cdot v(\xi-x'')=U_2^*\cdot D^2f(x'')\cdot v(\xi-x'+x'-x'')=U_2^*\cdot D^2f(x'')\cdot v(V_2\cdot(-\boldsymbol{\alpha}-\boldsymbol{\delta})).$$ Also, using Lemma \ref{app}, we can show that $U_2^*\cdot Df(\exact)\cdot v=(U_2^*\cdot \tilde{U}_1)\cdot\tilde{\Sigma}_1\cdot(\tilde{V}_1^*\cdot V_2)\cdot\boldsymbol{\lambda}_2=\bigO(\epsilon^2)$. Since $\|U_2^*\cdot Df(x'')\cdot v\|=\bigO(\epsilon^2)$ is already derived, we have $\|U_2^*\cdot D^2f(x'')\cdot v(\xi-x'')\|=\bigO(\epsilon^2)$ from the expansion (\ref{eq:taylorOfxi}). It means that $\|U_2^*\cdot D^2f(x'')\cdot v(V_2\cdot(-\boldsymbol{\alpha}-\boldsymbol{\delta}))\| =\bigO(\epsilon^2)$. Because $\|\xi-x''\|=\bigO(\epsilon)$, applying the similar argument of the proof for Lemma 11 (\ref{lem:app3})  shows that $U_2^*\cdot D^2f(x'')\cdot v\cdot V_2$ is invertible. Thus, $\|-\boldsymbol{\alpha}-\boldsymbol{\delta}\|=\bigO(\epsilon^2)$, and so $$\|\xi-x''\|=\|\xi-x'+x'-x''\|=\|V_2\cdot(-\boldsymbol{\alpha}-\boldsymbol{\delta})\|=\bigO(\epsilon^2).$$
The result follows.
\end{proof}

\begin{rmk} 

\begin{enumerate}
    \item A correct estimation of $\brx$ is necessary for the quadratic convergence of Algorithm \ref{revisedNewton}, which can be achieved by a suitable choice of a tolerance $\tau$. 
Assume $\epsilon=\|x-\xi\|$ is sufficiently small such that 
$\|Df(\exact)-Df(x)\|<c/4$ and $\|\opB(\xi)-\opB(x)\|<c'/4$ where $c$ and $c'$ are the smallest positive singular values of $Df(\xi)$ and $\opB(\xi)$ respectively. Then setting $\tau=\min\{c/2,c'/2\}$ guarantees the estimation of $\brx$ determined by $n-\brx=\arg\max\{i\mid\sigma_i>\tau\}$ is correct. Even though $c$ and $c'$ are not known a priori, $\tau$  can still be set suitably from the gap between singular values of $Df(x)$. 
See \cite{Kung} for more results about determining numerical ranks of perturbed matrices.
 \item When $\brx=n$, the first refinement in Algorithm \ref{revisedNewton} is redundant since $\left\|\x'-\exact-V\cdot\boldsymbol{\alpha}\right\|=0$  holds
for  $\boldsymbol{\alpha}=V^*(\x'-\exact)$. Therefore, if $\brx=n$, we will skip the first three steps. The quadratic convergence of Algorithm \ref{revisedNewton} still holds in this case.   See Example \ref{ex18}. 
\end{enumerate}
\end{rmk}

 \subsection{Examples}
In this subsection, we provide examples illustrating how the two-step Newton's method works. Each example demonstrates possible scenarios that can happen while the algorithm runs. The first example shows how the algorithm works regularly.

\begin{exm}[Example \ref{running_example_1} continued]
\label{running_example_1.1}
  Let $x=[1.001,0.999,1.001]^\top$ be an approximation of the deflation-one singular zero $\xi=[1,1,1]^\top$ with an error $\|x-\xi\|\approx1.7\times10^{-3}$ and $\tau=0.1$ be a tolerance to identify the type of singularity.

  The singular value decomposition of $Df(x)$ gives 
  \begin{equation*}
          Df(x) = \begin{bmatrix}
               1.002 & 1 & 1 \\
               1 & 0.998 & 1 \\
               1 & 1 & 1.002
\end{bmatrix}=U\cdot \Sigma\cdot V^*
  \end{equation*}
  where $\Sigma=\diag(3.0007,0.0020,0.0007)$, $$U=\begin{bmatrix} -0.5776 & 0.7071 & 0.4079 \\
               -0.5768 & 3.6\times10^{-15} & -0.8169 \\
               -0.5776 & -0.7071 & 0.4079\end{bmatrix}, \text{ and }~ V^*=\begin{bmatrix}
                                  -0.5776 & -0.5768 & -0.5776  \\
               0.7071 & 4.5\times10^{-14} & -0.7071 \\
               -0.4079 & 0.8169 & -0.4079
               \end{bmatrix}.$$ It determines that $\kappa=2$, and hence we know that $U=[U_1,U_2]$ and $V=[V_1,V_2]$ with  $U_1,V_1\in \mathbb{C}^{3\times 1}$ and $U_2,V_2\in\mathbb{C}^{3\times 2}$.
As the first refinement, we update $x$ to $$x'=x-\frac{1}{3.0007}V_1\cdot U_1^*\cdot f(x)=[  1.000666,
               0.998667,
               1.000666]^\top.$$
                 with an error $\|x'-\xi\|\approx1.6\times10^{-3}$. Choosing $v=[2,-1,-1]^\top$ from the numerically approximated kernel, we have invertible $\opB'=\begin{bmatrix}
                       1.0000 & -1.7305  \\
               1.7305 &  1.0018
                 \end{bmatrix}$. Then, we have the step length $\boldsymbol{\delta}=-\opB'\cdot U_2^*\cdot Df(x')\cdot v=[-7.1\cdot 10^{-7}, 0.0016]^\top$. Finally, we update $x'$ to 
                 $$x''=x'+V_2\cdot \boldsymbol{\delta}=[               0.99999967,
               1.00000067,
               1.00000067]^\top
$$
  with an error $\|x''-\xi\|\approx1.0\times10^{-6}$.
  Therefore, the two-step Newton's method achieves quadratic convergence. The pictorial description is in Figure \ref{sketch}.
\end{exm}

\begin{figure}
\centering

\tdplotsetmaincoords{87}{218}
\begin{tikzpicture}[tdplot_main_coords,scale=2]
\draw[line width=0.1mm,->] (0.5,0.5,.5) -- (3,0.5,0.5) node[anchor=north east]{$x$};
\draw[dashed,line width=0.1mm] (0.5,0.5,1.5) -- (0.5,0.5,2.05);
    \def\x{.5}
    \draw[thin] (-1.4,1.8,2.6) -- (1.6,-1.2,2.6);
    \draw[thin] (3,0,0) -- (1.6,-1.2,2.6) ;
    \draw[thin] (3,0,0) -- (0,3,0) ;
    \draw[thin] (-1.4,1.8,2.6) -- (0,3,0) ;
    \filldraw[
        draw=orange,
        fill=orange!50,
        fill opacity=.7,
    ] (-1.4,1.8,2.6) 
    -- (1.6,-1.2,2.6) 
    -- (3,0,.0)
    -- (0,3,0) 
    -- cycle;

\draw[color=red] (1+1,1-1,1+1) node {$\boldsymbol{\bullet}$};
\draw (2,0,2) node[anchor=south east] {$x$};
\draw[color=blue] (1+.666,1-1.333,1.666) node {$\boldsymbol{\bullet}$};
\draw (1+.666,1-1.333,1.666) node[anchor=south west] {$x'$};
\draw[color=black] (.99967,1.00067,1.00067) node {$\boldsymbol{\bullet}$};
\draw (.99967,1.00067,1.00067) node[anchor=north] {$x''$};
\draw[color=black] (.5,.5,.5) node {$\boldsymbol{\bullet}$};
\draw[color=black] (.4,.4,.4) node[anchor=north] {$(0.9995,0.9995,0.9995)$};

\draw[line width=0.5mm,->] (1.99,-0.01,1.99) -- (1+.7,1-1.3,1.7);
\draw[line width=0.5mm,dashed,->] (1+.666,1-1.333,1.666) -- (.95,1.05,1.05);

\draw[line width=0.1mm,->] (0.5,0.5,0.5) -- (0.5,3.2,0.5) node[anchor=north west]{$y$};
\draw[line width=0.1mm] (0.5,0.5,0.5) -- (0.5,0.5,1.5);
\draw[line width=0.1mm,->] (0.5,0.5,2.05) -- (0.5,0.5,3) node[anchor=south]{$z$};
\end{tikzpicture}
\hspace{5pt}
\begin{tikzpicture}[scale=18]
\draw[line width=0.1mm,->] (.9-.03,.96) -- (.9-.03,1.25) node[anchor=north east]{$z$};
\draw[line width=0.1mm,->] (.9-.03,.96) -- (.65,.96) node[anchor=south]{$x,y$};    

\draw[thin] (.9,1.2) -- (.9-0.15,.9);    
\draw[thick,color=orange,opacity=.9] (.9,1.2) -- (.9-0.15,.9);    

\draw[color=red] (.8,1.1) node {$\boldsymbol{\bullet}$};
\draw[color=blue] (.9-.07,1.07) node {$\boldsymbol{\bullet}$};
\draw[color=black] (.9-.099967,1.000667) node {$\boldsymbol{\bullet}$};
\draw[color=black] (.8,1) node {$\boldsymbol{\bullet}$};
\draw[color=black] (.9-.03,.96) node {$\boldsymbol{\bullet}$};
\draw[color=black] (.9,.93) node {$(0.9993,0.9993,0.9996)$};

  \draw[dashed,line width=0.4mm,domain=.7:.9, smooth, variable=\x, red] plot ({\x}, {sqrt((.1)^2-(\x-0.8)^2)+1});
  \draw[dashed,line width=0.4mm,domain=.7:.9, smooth, variable=\x, red] plot ({\x}, {-sqrt((.1)^2-(\x-0.8)^2)+1});
  \draw[dashed,line width=0.4mm,domain=.8-.076:.8+0.076, smooth, variable=\x, blue] plot ({\x}, {sqrt((.076)^2-(\x-0.8)^2)+1});
  \draw[dashed,line width=0.4mm,domain=.8-.076:.8+0.076, smooth, variable=\x, blue] plot ({\x}, {-sqrt((.076)^2-(\x-0.8)^2)+1});

\draw[line width=0.3mm,color=red] (.8-.069,.92) -- (.8+0.076,1.21);    
\draw[line width=0.3mm,color=red] (.8-.011,.92) -- (.8+0.081,1.11);    
\draw[line width=0.3mm,color=blue] (.8-.031,.92) -- (.8+0.081,1.144);    
\draw[line width=0.3mm,color=blue] (.8-.049,.92) -- (.8+.081,1.18);

\draw[line width=0.5mm,->] (.8,1.1) -- (.9-.073,1.073);
\draw[line width=0.5mm,dashed,->] (.9-.07,1.07) -- (.9-.099,1.005);

\end{tikzpicture}
\caption{The description of Example \ref{running_example_1.1}. The point $x$ is depicted in red, while the blue point $x'$ and the black point $x''$ are obtained through the refinement steps.  The orange plane represents the kernel of the Jacobian of $f$.  The origin of the coordinate axes has been shifted to a suitable point due to scaling issues. The second figure shows the projection onto $y=x$ plane. The solid blue and red lines are contours of $\|f\|=10^{-3.5}$ and $10^{-3}$ respectively and dotted circles describe the distance from $\xi$ to $x$ and $x'$. The distance from $\xi$ to $x''$ is negligible compared to the other distances and thus omitted.}
\label{sketch}
\end{figure}
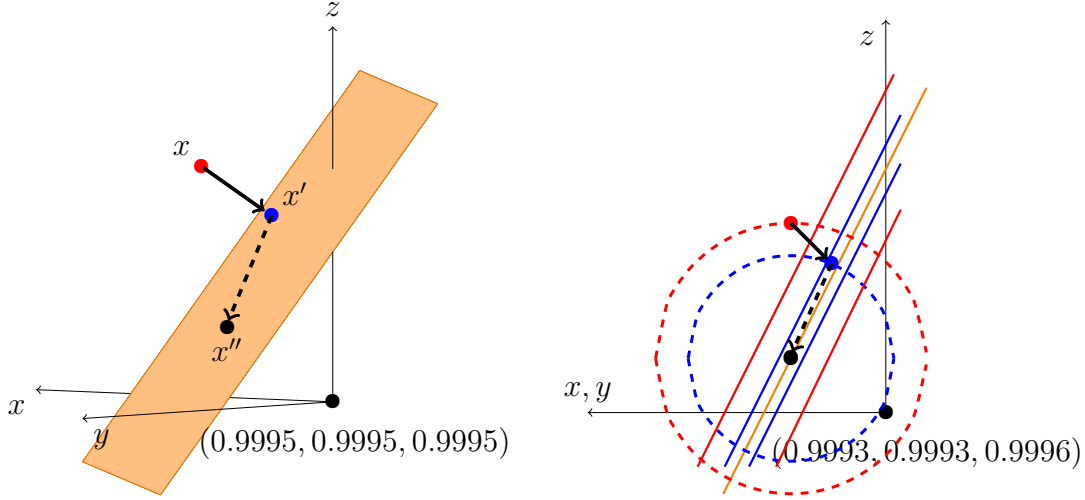

If $x-\xi$ is approximately orthogonal to  $\ker Df(x)$, then quadratic convergence can already be achieved after running the first refinement (that is, step 1, 2, and 3  in  Algorithm \ref{revisedNewton}). The following example describes this case.

\begin{exm}[Example \ref{running_example_1} continued]\label{ex17}
  Let $x=[1.001,1.001,1.001]^\top$ be an approximation of the deflation-one singular zero $\xi=[1,1,1]^\top$ with an error $\|x-\xi\|\approx1.7\times10^{-3}$. A tolerance is chosen by $\tau=0.1$.
  The singular value decomposition of $Df(x)$ produces $\Sigma=\diag(3.0020,0.0020,0.0020)$ with
  $$U=\begin{bmatrix}             -0.5774 & 0.8165 & -8.8\times10^{-17} \\
               -0.5774 & -0.4082 & -0.7071 \\
               -0.5774 & -0.4082 & 0.7071
  \end{bmatrix}, \text{ and }~ V^*=\begin{bmatrix}
                                     -0.5774 & -0.5774 & -0.5774  \\
               0.8165 & -0.4082 & -0.4082 \\
               0 & -0.7071 & 0.7071
               \end{bmatrix}$$
  so that we estimate $\kappa=2$. Note that $x-\xi=[0.001,0.001,0.001]^\top$ approximately spans $\{\ker Df(x)\}^\bot$. The first refinement returns 
$$x'=x-\frac{1}{3.0020}V_1\cdot U_1^*\cdot f(x)=[  1.00000033,
               1.00000033,
               1.00000033]^\top$$              
               with an error $\|x'-\xi\|\approx5.8\times10^{-7}$. Therefore, the first refinement achieves quadratic convergence.
\end{exm}

The last example shows that if $\brx=n$ holds, then the first three steps should be redundant, and the last three steps will achieve quadratic convergence.

\begin{exm}[Example 9 (\ref{running_example_2}) continued]\label{ex18}
Let $x=[0.0001,0.0001,0.0001]^\top$ be an approximation of the deflation-one singular zero $\xi$ at the origin with an error $\|x-\xi\|\approx1.7\times10^{-4}$ and $\tau=0.1$. Then, the SVD of $Df(x)$ returns $\Sigma=\diag(2\times 10^{-4},2\times 10^{-4},2\times 10^{-4})$, so that $\kappa=3$. Therefore, $x'=x$. Choosing $v=[2,-1,-1]^\top$, we have 
\[    \opB' = \begin{bmatrix}
               5.1\times10^{-17} & -2.3\times10^{-17} & -1.4134 \\
               8.2\times10^{-9} & 2.0006 & 1.4\times10^{-8} \\
               1.4134 & 3.6\times10^{-15} & -2.0006
             \end{bmatrix}
  \quad\text{and}\quad
    \boldsymbol{\delta} = \begin{bmatrix}
               1.7326\times10^{-4} \\
               -7.1\times10^{-13} \\
               0
             \end{bmatrix}
\]
and we get $$    x''=[
               -3.0019\times10^{-8},
               -3.0019\times10^{-8},
               -3.0018\times10^{-8}]^\top$$
with an error $\|x''-\xi\|\approx5.2\times10^{-8}$.
\end{exm}

\section{Experiments}\label{sect5}

We present extensive experimental results to demonstrate the effectiveness of our proposed two-step Newton's method. All experiments were conducted using Maple 2018 \cite{maple1994waterloo} on a MacBook Pro equipped with a 2.3 GHz 8-Core Intel Core i9 processor and 16 GB 2400 MHz DDR4 memory running Catalina. The Maple environment variable was set to Digits:=14. 

\subsection{Performance} Our primary goal in this section is to evaluate the effectiveness and stability of our new algorithm in refining deflation-one singular zeros.

\subsubsection{Effectiveness}

The effectiveness is measured by observing the distance $\|x-\xi\|$ from an approximation $x$ to the exact zero $\xi$ as the number of iterations increases. We use an initial guess with two digits of precision and iterate the algorithm 3 times. We apply Algorithm \ref{revisedNewton} to the benchmark examples presented below, and the results are reported in Table \ref{table:experimentalResults}.
The tolerance for each example is chosen independently to accurately estimate the breadth of the deflation-one singular zero, which is not included in the table. 
The result shows that for benchmark examples, the $\|x-\xi\|$ exponent decreases to at least $-10$ within $3$ iterations.

\begin{table}
\centering
\begin{tabular}{c|c|c|c|c|c}
    Systems & Zeros & $\kappa$ & $\rho$ & $\mu$ & Numerical Errors \\
    \hhline{=|=|=|=|=|=}
    cbms1 \cite{sturmfels2002solving} & $(0,0,0)$ & $3$ & $4$ & $11$ & $-02\rightarrow -03 \rightarrow -05\rightarrow -10$ \\
    cbms2 \cite{sturmfels2002solving} & $(0,0,0)$ & $3$ & $3$ & $8$ & $-02\rightarrow <-20 \rightarrow <-20 \rightarrow <-20$\\
    mth191 \cite{leykin2006newton} & $(0,1,0)$ & $2$ & $2$ & $4$ & $-02\rightarrow- 04 \rightarrow -09\rightarrow -17$\\
    KSS \cite{kobayashi1998numerical} & $(1,1,1,1,1)$ & $4$ & $4$ & $16$ & $-02\rightarrow -05 \rightarrow -12\rightarrow <-20$\\
    Caprasse \cite{moritsugu1999multiple} & $(2,-\sqrt{3}i,2,\sqrt{3}i)$ & $2$ & $2$ & $4$ & $-02\rightarrow -05 \rightarrow -10\rightarrow -13$\\
    Cyclic9 \cite{faugere1999new} & $^*C_9$ & $2$ & $2$ & $4$ & $-02\rightarrow -05 \rightarrow -10\rightarrow -12$
\end{tabular}
\caption{The result of Algorithm \ref{revisedNewton} on benchmark examples. The column ``Numerical Errors'' shows the exponent of $\|x-\xi\|$ at each iteration, and the symbol $\rightarrow$ means the change of exponents. The notation $<-20$ indicates that the exponent of $\|x-\xi\|$ goes less than $-20$.}
{\Small$^*C_9=(z_0,z_1,z_2,z_0,-z_2,-z_1,z_0,-z_2,-z_1)$ with $z_0\approx -.9396926-.3520201i,z_1\approx -2.4601472-.8954204i,z_2\approx -.3589306-.1306401i$.}
\label{table:experimentalResults}
\end{table}

\subsubsection{Stability}

For stability, we measure how an approximation converges to the exact zero with respect to the change of the initial precision, the distance from the other zero, and the specified tolerance. 
We apply Algorithm \ref{revisedNewton} on the system
\begin{equation}\label{exm:stability}
	f(x,y,z) = \begin{bmatrix}
		x^2\\
		y^2\\
		z^2+10^{-k}z
	\end{bmatrix}
\end{equation}
with two deflation-one singular zeros at $\xi=[0,0,0]^\top$ and $\eta=[0,0,-10^{-k}]^\top$. Both singular zeros have $\kappa =2,\rho=2$ and $\mu =4$. 

Firstly, we fix $\eta=[0,0,-10^{-2}]^\top$ and $\tau=10^{-2}$, and measure how $\|x-\xi\|$ changes depending on the precision of an approximation $x$. The result is given in Table \ref{table:stability1}. The result shows when a more precise approximation is provided, the convergence becomes faster.

\begin{table}
	\centering
	\begin{tabular}{c|c}
		$x$ &  Numerical Errors \\
		\hhline{=|=}
$(10^{-5},10^{-5},10^{-5})$ & $-04\rightarrow -08 \rightarrow -14\rightarrow -26$ \\
 $(10^{-4},10^{-4},10^{-4})$  & $-03\rightarrow -06 \rightarrow -10 \rightarrow -18$\\
$(10^{-3},10^{-3},10^{-3})$ & $-02\rightarrow- 04 \rightarrow -06\rightarrow -10$
	\end{tabular}
	\caption{The change of numerical errors depending on the precision of $x$ when Algorithm \ref{revisedNewton} is applied on the system (\ref{exm:stability}).}
	\label{table:stability1}
\end{table}

Secondly, we fix an approximation $x=[10^{-3},10^{-3},10^{-3}]^\top$, but vary the value of $k$ for $\eta$ and the tolerance $\tau$ to see how they affect the convergence. For each case, we estimate the breadth, denoted by $\kappa^*$. The result is summarized in Table \ref{table:stability2}.
When $\eta$ becomes closer to $\xi$ (the first and second row) or $\tau$ is set larger (the last row), two singular roots $\xi$ and $\eta$ are considered as a deflation-one singular zero at $(4\xi+4\eta)/8$ with $\kappa =3,\rho=3$ and $\mu =8$. So, $x$ converges to the point $(4\xi+4\eta)/8$.
\begin{table}
\centering
\begin{tabular}{c|c|c|c}
    $\eta$ & $\tau$ & $\brx^*$ & Numerical Errors \\
    \hhline{=|=|=|=}
    $(0,0,10^{-3})$ & $10^{-2}$ & $3$ & $-02\rightsquigarrow <-20 \rightsquigarrow <-20\rightsquigarrow <-20$\\
    $(0,0,10^{-4})$ & $10^{-2}$ & $3$ & $-02\rightsquigarrow -18 \rightsquigarrow <-20\rightsquigarrow <-20$\\
    $(0,0,10^{-2})$  & $10^{-1}$ & $3$ & $-02\rightsquigarrow -18 \rightsquigarrow <-20\rightsquigarrow <-20$
\end{tabular}
\caption{The convergence of Algorithm \ref{revisedNewton} on system (\ref{exm:stability}) depending on changes of $\eta$ and $\tau$.  The symbol $\rightsquigarrow$ means the change of the exponent of the Euclidean distance
between $x$ and the point $(4\xi+4\eta)/8$. The notation $<-20$ indicates that the exponent goes less than $-20$.}
\label{table:stability2}
\end{table}
This result shows that not only the approximation precision but also the location of other zeros and the tolerance may also affect the stability of the convergence.

\subsection{Comparison} In this subsection, we compare the efficiency and robustness of our new algorithm with LVZ algorithm in refining deflation-one singular zeros.

\subsubsection{Efficiency}

The efficiency comparison is conducted by applying the LVZ algorithm and ours on a random variant of the $n\times n$ system 
 \begin{equation*}
	f(x_1,x_2,x_3,\ldots,x_n) = \begin{bmatrix}
		x_1^2 &
		\cdots&
		x_k^2&
		x_{k+1}&
		\cdots&
		x_n
	\end{bmatrix}^\top.
\end{equation*}
It has a deflation-one singular zero at $\xi$ at the origin with $\kappa =k,\rho=k$, and $\mu =2^k$. Then, an $n\times n$ matrix $A$ and an $n$-tuple vector $b$ are randomly chosen to generate a variant system $f(A(X-b))$, which has a particular zero at $b$ with the same type of deflation-one singularity. 

The results comparing our algorithm to the LVZ algorithm are presented in Table \ref{table:comparisonResults}. We iterate the algorithms $3$ times and report the averaged elapsed time in seconds for one iteration. Our algorithm uses smaller matrices of size $\kappa \times \kappa$, compared to the LVZ algorithm that uses matrices of size $(2n+1) \times (2n-\brx+1)$. Therefore, as $n$ increases and $\brx$ decreases, our algorithm may become faster than the LVZ algorithm.

\begin{table}
\centering
\begin{tabular}{c|c|c|c}
    $n$ & $\brx$ &LVZ (second) & Two-step Newton (second)\\
    \hhline{=|=|=|=}
    10 & 2 & 0.067 & 0.070 \\
    10 & 8 & 0.060 & 0.063 \\
    25 & 2 & 1.433 & 0.198 \\
    25 & 23 & 1.265 & 0.171\\
    50 & 2 & 36.354 & 1.533 \\
    50 & 48 & 30.567 & 3.471
\end{tabular}
\caption{Comparison of elapsed times between LVZ algorithm and the two-step Newton's method (Algorithm \ref{revisedNewton}).}
\label{table:comparisonResults}
\end{table}

\subsubsection{Robustness}

The robustness comparison is done by applying the LVZ algorithm and ours to the following system:
  \begin{equation*}
  f(x,y) = \begin{bmatrix}
  x-y^2\\
  x^2-y^2
  \end{bmatrix}
  \end{equation*}
with a deflation-one singular zero at $\xi$ at the origin with $\kappa =1,\rho=1$ and $\mu =2$.
Let $x=[0.3,0.3]^\top$ be an approximation of $\xi$ and $\tau=0.1$ be a tolerance to identify the type of singularity.
The LVZ algorithm returns
\[g(x,y,\lambda) = \begin{bmatrix}
  f(x,y)\\
  \lambda-2y\\
  2x\lambda-2y
  \end{bmatrix}\]
as the deflated overdetermined system and $[0.3,0.3,0.7059]^\top$ as an approximation of the regular zero $[0,0,0]^\top$.
Then, Gauss-Newton's method returns 
\begin{multline*}
(0.3,0.3,0.7059)\rightarrow(0.4227,0.4718,1.0336)\rightarrow(0.5086,0.6369,1.2827)\\\rightarrow(0.5010,0.6136,1.2264)\rightarrow(0.5001,0.6124,1.2248)\rightarrow
(0.5000,0.6124,1.2247) 
\end{multline*}
after five iterations. It doesn't converge to $[0,0,0]^\top$ but to $[1/2,\sqrt{6}/4,\sqrt{6}/2]^\top$ which is a stationary point of
$\min\|g\|^2$ (see Remark \ref{rmk:deflationProperties2}).
Meanwhile, the two-step Newton's method returns 
\begin{multline*}
	(0.3,0.3)
	\rightarrow(0.3496,0.0277)\rightarrow(0.0562,0.0621)\\\rightarrow(0.0121,0.0082)\rightarrow(0.0002,0.0002)\rightarrow
	(1.3\times10^{-7},8.4\times10^{-8})
\end{multline*} after five iterations. The quadratic convergence is observed in the last two iterations.

\section{Conclusion and possible extensions}

This paper introduces the revised Newton iteration for deflation-one singular zeros of square analytic systems. We provide proof of its practicality by demonstrating that an initial approximation for a deflation-one singular zero converges quadratically to an exact root of the system. Additionally, we conducted experiments to show that our algorithm has more effectiveness than the methods exploiting an overdetermined system obtained by deflation. Lastly, our examples demonstrate that our algorithm can be more effective for larger-sized systems.

We suggest some open problems that we didn't pursue in this paper.

\begin{itemize}
    \item[\textbf{Quantitative quadratic convergence}] Smale's $\alpha$-theory \cite{Blum:1997} certifies the convergence of the Newton iteration for a given approximation. We require a quantitative quadratic convergence of the Newton iteration to obtain $\alpha$-theory type result for deflation-one singularity. In other words, for the \textit{Newton operator} $N(f,x)$ defined by $$N(f,x):=\left\{\begin{array}{ll}
        x-Df(x)^{-1}f(x) & \text{if }Df(x)\text{ is invertible}  \\
        x & \text{otherwise}
    \end{array},\right.$$ we need to show that for every $k\in \mathbb{N}$,
    \[\left\|N(f,x)^k-\xi\right\|\leq \left(\frac{1}{2}\right)^{2^k-1}\left\|x-\xi\right\|\]
    where $N(f,x)^k$ is the $k$-th iteration of the Newton operator.

   \item[\textbf{Cluster isolation for deflation-one singularity}]
    The cluster isolation problem has been studied in \cite{Dedieu2001on,HJLZ2020}, where an invertible operator was developed for singular zeros, and a region isolating the singular zero from other zeros of the system was derived. Since the operator $\opA$ in our two-step Newton's method plays a similar role, it may be possible to extend the method to approach cluster isolation for deflation-one singular zeros.

    \item[\textbf{Toward higher deflation steps}] 
    Extending the algorithm to isolated singular zeros that require multiple deflation steps or higher-order deflation (e.g., see \cite{LVZ:2008}) is a natural direction for future work. The operator $\opA$ must be modified accordingly to handle such singular zeros. Specifically, a square invertible operator must be derived from the augmented system obtained by the deflation method. Developing regularizing algorithms for singular zeros with multiple deflation steps or higher-order deflation would also be essential for understanding multiple zeros that have not been studied yet.

    \item[\textbf{Refining iteration for the singular value decomposition}] As Algorithm \ref{alg1} requires the singular value decomposition, iteratively refining the decomposition may result in faster computation. Considering that Newton's method is often applied iteratively, the refining iteration can be induced by reusing the decomposition process from the previous Newton step.
\end{itemize}

\section*{Acknowledgement}
Agnes Szanto (1966-2022) was an advocate and enthusiastic proponent of hybrid symbolic-numeric computations. She dedicated much of her career to describing and certifying singular solutions of polynomial systems \cite{HMS2015, HMS2017, AHZ2018, MMS2020, mantzaflaris2023certified}, and was widely regarded as one of the most active experts in this area. It is with great sadness that we acknowledge her passing, which represents a significant loss to our community. As a tribute to her memory, we dedicate this paper to Agnes Szanto and will remember her for her enthusiasm, generosity, and kindness.

\bibliography{paper}
\bibliographystyle{alpha}

\end{document}